\newtheorem{theorem}{Theorem}[section]
\newtheorem{lemma}[theorem]{Lemma}
\newtheorem{cor}[theorem]{Corollary}
\theoremstyle{definition}
\theoremstyle{remark}
\newtheorem{claim}{Claim}[theorem]
\newtheorem*{claim*}{Claim}
\newenvironment{claimproof}[1][Proof of Claim]
  {%
    \proof[#1]%
  }
  {%
    \endproof%
  }
\title{Layered subgraphs of the hypercube}
\author{Natalie Behague \and Imre Leader \and Natasha Morrison\thanks{Research supported by NSERC Discovery Grant RGPIN-2021-02511 and NSERC Early Career Supplement DGECR-2021-00047.} \and Kada Williams}
\newcommand{\Addresses}{{
  \bigskip
  \footnotesize

  Natalie Behague, Mathematics institute, University of Warwick, Coventry, UK. \\ \nopagebreak
  \textit{Email}: \href{mailto:natalie.behague@warwick.ac.uk}{natalie.behague@warwick.ac.uk}

  \medskip
    
    Imre~Leader, Department of Pure Mathematics and Mathematical Statistics, Centre for Mathematical Sciences, Wilberforce Road, Cambridge CB3 0WB, UK. \\ \nopagebreak
    \textit{Email}:
    \href{mailto: i.leader@dpmms.cam.ac.uk}{ i.leader@dpmms.cam.ac.uk}
 
    \medskip

    Natasha Morrison, Department of Mathematics and Statistics, University of Victoria, Canada. \\ \nopagebreak
    \textit{Email}: \href{mailto:nmorrison@uvic.ca}{nmorrison@uvic.ca}

    \medskip
    
    Kada Williams, Department of Pure Mathematics and Mathematical Statistics, Centre for Mathematical Sciences, Wilberforce Road, Cambridge CB3 0WB, UK. \\ \nopagebreak
    \textit{Email}: \href{mailto:kkw25@cam.ac.uk}{kkw25@cam.ac.uk}
}}
\date{\today}
\begin{document}

\maketitle

\begin{abstract}
    A subgraph of the $n$-dimensional hypercube is called `layered' if it is a subgraph of a layer of some hypercube. In this paper we show that there exist subgraphs of the cube of arbitrarily large girth that are not layered. This answers a question of Axenovich, Martin and Winter. Perhaps surprisingly, these subgraphs  may even be taken to be induced. 
\end{abstract}

\section{Introduction}


Recall that the $n$-dimensional \emph{hypercube} $Q_n$ has vertex set $\{0, 1\}^n$ and two vertices $x, y \in Q_n$ are joined by an edge if and only if they differ on exactly one coordinate. In this paper, we are interested in subgraphs of the hypercube that cannot be found as subgraphs of a `layer' of any hypercube. 

To be more precise, the $k$-th \emph{layer} of $Q_n$ is the subgraph induced by all vertices in $Q_n$ containing exactly $k$ or $k+1$ 1s in their coordinate representation. We say that a graph $G$ is \emph{cubical} if there exists some $n$ such that $G$ is a subgraph of the hypercube $Q_n$, and we say that $G$ is \emph{layered} if there exists some $n$ and $k$ such that $G$ is a subgraph of the $k$th edge-layer of $Q_n$.

Whether or not a given cubical graph is layered is an important question that relates to Tur\'an density problems. Indeed, if a cubical
connected graph $G$ is not layered then it is easy to see that it has
positive Tur\'an edge-density, meaning that the greatest number of
edges in a $G$-free subgraph of $Q_n$ is not $o(e(Q_n))$ -- we
just take alternate layers of $Q_n$. It is a major open question
to determine which cubical graphs have positive Tur\'an edge-density:
we mention for example the recent proof by Grebennikov and Marciano~\cite{grebennikov2024c10} that the $10$-cycle 
has positive Tur\'an density, which was the last remaining cycle
for which the answer was not known. 
See Axenovich, Martin and Winter~\cite{axenovich2023graphs} for
background and many related results. 

Now, there are many graphs that are cubical but not layered -- a simple
example is the 4-cycle. However, the known examples all tend to fail to
be layered for `local' reasons: they contain 4-cycles, or else some
other dense cube-like local structure. Indeed, one can find
graphs of girth 6 that are cubical but not layered, but getting beyond this seems difficult. 
Axenovich, Martin and Winter~\cite{axenovich2023graphs} 
exhibited an elegant graph of girth 8 that is cubical but not layered. They asked if there exist graphs of arbitrarily large girth that are cubical but not layered -- in other words, cubical graphs that fail to be layered for a `long-distance' reason. 
Our aim in this paper is to answer this question in the affirmative.

\begin{restatable}{theorem}{main}
 \label{thm:main}
    For every $k$ there exists a graph of girth at least $k$ that is cubical but not layered.
\end{restatable}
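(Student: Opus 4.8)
My first step is to reformulate what it means to be layered so that the obstruction becomes visible. Since every layer is bipartite and every cubical graph is bipartite, I would fix a connected cubical graph $G$ with bipartition $(X,Y)$ and use the standard description of a cube embedding: it is a colouring of $E(G)$ by coordinates in which each colour class is simultaneously a matching and an edge-cut $\delta(S_c)$, and the cuts separate all pairs of vertices. The weight (number of $1$s) then gives a height function $w\colon V(G)\to\mathbb Z$ changing by $\pm1$ along each edge, and being layered means exactly that $w$ takes only two consecutive values. For connected $G$ this is equivalent to $w$ being constant on $X$ and on $Y$, i.e.\ to every edge pointing ``up'' from $X$ to $Y$. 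Concretely, a cube colouring witnesses layeredness iff it is \emph{coherent}: for every colour $c$, all the $X$-endpoints of the $c$-coloured edges lie on one side of $\delta(S_c)$. This condition is intrinsic, since two $c$-edges have their $X$-endpoints on the same side precisely when some (equivalently every) path between those endpoints meets colour $c$ an even number of times. So \emph{not} being layered means that in \emph{every} cube colouring some colour is incoherent.

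The second idea is that cubicality should be free: I would build $G$ directly as a subgraph of a hypercube, leaving only non-layeredness and large girth to arrange. The obstacle here is that non-layeredness is fragile under dilution — a $4$-cycle is cubical but not layered, yet subdividing it yields a long even cycle, which \emph{is} layered. Hence the obstruction cannot live on a single short cycle; it must be genuinely two-dimensional. My plan is therefore to base the construction on a grid-like cubical graph $G_0$ drawn on a surface of non-trivial topology (for instance a suitably bipartite square grid on a Möbius band), engineered so that transporting the coherence sign of a colour around a non-contractible closed walk reverses it. Such a $\mathbb Z/2$ holonomy would force some colour to be incoherent in \emph{every} colouring, making $G_0$ cubical but not layered for a global, topological reason rather than a local one.

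To raise the girth I would subdivide every edge of $G_0$ uniformly into a path of odd length $\ell$ (odd so that the two ends keep their parities and the net height change stays $\pm1$), obtaining $G_\ell$ with girth $\ell\cdot\mathrm{girth}(G_0)\to\infty$. Since subdivision is a homeomorphism of the underlying space it preserves the surface topology, and hence the holonomy, while — unlike the degenerate $C_4$ case, whose ``surface'' is a disc — there is now a genuine non-contractible cycle carrying the obstruction. I would check that subdivision preserves cubicality by realising each subdivided edge as a short staircase using one original coordinate and $(\ell-1)/2$ fresh coordinates taken up-and-down, which keeps every colour class a matching-cut and keeps the map injective; this is routine once the coordinates of $G_0$ are written out.

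The heart of the argument, and the step I expect to be hardest, is proving non-layeredness of $G_\ell$ for all re-embeddings at once, i.e.\ producing an embedding-\emph{independent} invariant. I would define the $\mathbb Z/2$ holonomy of the coherence sign around the chosen non-contractible walk, show it is well defined, unchanged by any cube colouring and by subdivision, and evaluate it to $1$ from the topology of $G_0$. Equivalently, assuming $G_\ell$ lies in a layer, I would read off the induced swap dynamics $S_{x_{j+1}}=(S_{x_j}\setminus\{b_j\})\cup\{a_j\}$ of the coordinate sets around the non-contractible cycle and derive a parity contradiction with the requirement that the sets return to their start. To obtain the induced version promised in the abstract I would finally check that, in the constructed embedding, the long fresh-coordinate staircases keep all non-adjacent vertices at Hamming distance at least $2$, so that $G_\ell$ is induced. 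The two principal risks to guard against are that $G_0$ fail to be bipartite around the twist, and that some exotic re-embedding annihilate the holonomy; both must be excluded by making the invariant manifestly independent of the colouring.
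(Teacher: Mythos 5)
There is a genuine gap, and it sits exactly where you predict: in the claim that your $\mathbb{Z}/2$ ``holonomy'' obstruction exists and survives subdivision. Your opening reformulation of layeredness (coherence of each colour class, equivalently the parity condition of Axenovich--Martin--Winter on paths between equally-coloured edges) is correct, but everything after that is a plan rather than a proof. Two specific problems. First, you never exhibit the base graph $G_0$ nor prove that a M\"obius-band quadrangulation is cubical but non-layered \emph{for a reason expressible as a colouring-invariant holonomy}; this is asserted, not shown. Second, and more fatally, the step ``subdivision is a homeomorphism, hence preserves the holonomy'' conflates the topology of the graph as a space with the combinatorics of cube colourings. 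A cube colouring of the subdivided graph $G_\ell$ does \emph{not} restrict to or arise from a cube colouring of $G_0$: each subdivided edge may use entirely fresh colours in arbitrary patterns (indeed your own cubicality construction does exactly this), so the set of colourings over which your invariant must be constant is vastly larger than anything controlled by $G_0$. Your own example shows the danger: $C_4$ is cubical and non-layered, yet its subdivision is layered, so non-layeredness is not preserved by subdivision in general. The claim that ``topological'' obstructions, unlike ``local'' ones, do survive is precisely the hard content of the theorem, and it is the one thing the proposal does not argue. It is far from clear it is even true: each face of the subdivided grid becomes a long even cycle, which is individually layered, and nothing you have written rules out a global layered labelling that accommodates the twist.

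For contrast, the paper does not try to preserve any obstruction under subdivision. Instead it introduces a metric invariant: a pair $(T,P)$ of a tree and vertex-pairs is $t$-distance-separating if $T$ embeds in a cube with all pairs at distance $t$, yet every embedding into a \emph{layer} forces some pair to distance $t+2$. A recursion (attaching many leaves to every vertex) increases $t$ by $2$ at each step, starting from an explicit path $P_6$; after enough iterations the pairs are forced to distance $k$ in any layer embedding, and then a ``spindle'' of $k+1$ internally disjoint paths of length $k$ between such a pair yields a pigeonhole contradiction (only $k$ coordinates are available for the first steps of $k+1$ disjoint geodesics). The non-layeredness is thus manufactured by forcing distances to stretch in layers, not by transporting a topological invariant -- and this is what fills the hole your proposal leaves open. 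If you want to salvage your approach, you would need to define the holonomy directly on colourings of the high-girth graph $G_\ell$ and prove its non-vanishing there, with no reference to $G_0$; at that point you would be facing the same difficulty the paper resolves by entirely different means.
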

In fact, we also obtain a strengthening of this result that tells us this is even possible with induced subgraphs. This is perhaps rather surprising. 

\begin{restatable}{theorem}{induced} \label{thm:induced}
    For every $k$ there exists a graph of girth at least $k$ that is an induced subgraph of some hypercube but is not layered.
\end{restatable}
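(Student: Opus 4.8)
The plan is to work throughout with the following reformulation of the two notions. Viewing a vertex's coordinate vector as a subset of $[n]$, a graph $G$ is cubical precisely when one can label its vertices by distinct subsets of some $[n]$ so that adjacent vertices have symmetric difference of size one, and it is layered precisely when this can be done using only sets of two consecutive sizes $k$ and $k+1$. Equivalently, writing $w(v)$ for the size of the set assigned to $v$, the graph is cubical iff it admits such a labelling at all, and layered iff it admits one whose weight function $w$ takes only two values (necessarily constant on each side of the bipartition induced by weight parity, since $w$ changes by exactly $\pm 1$ across every edge). I would first record this dictionary, together with the basic move of \emph{flipping} a coordinate, which shifts $w$ by $\pm 1$ on the two sides of the corresponding cut; this makes precise the idea that failure to be layered is the question of whether the local $\pm 1$ weight-jumps across edges can be globally reconciled into just two levels.

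The key conceptual point is that the obstruction for the $4$-cycle is a \emph{commuting square}: two coordinates added in either order force a genuine three-level configuration, and this is an inherently two-dimensional, local phenomenon. To answer the question we must instead manufacture an obstruction that is global, surviving arbitrarily large girth. The construction I propose is a \emph{twisted annular gadget} $G_k$: take a long cyclic arrangement of high-girth pieces (for instance long even paths or long even cycles) glued head-to-tail around one large cycle, but with the gluing performed with a parity \emph{twist}, so that transporting a putative two-level labelling once around the annulus returns it shifted. Concretely one arranges two ``generators'' whose displacements around the cycle are large and of carefully chosen parity, so that every short cycle is confined to a single tree-like local piece (forcing girth at least $k$), while the two generators together wrap the structure so that no consistent assignment of the coordinate-cut orientations can close up.

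To prove Theorem~\ref{thm:main} for this $G_k$ I would first exhibit an explicit cubical labelling, concatenating a bounded set of ``local'' coordinates describing each piece with a bounded set of ``gluing'' coordinates implementing the wrap; checking the symmetric-difference-one condition edge by edge is routine. The substance is showing $G_k$ is not layered. Here I would argue by contradiction: given a two-level labelling, orient every edge from the lower to the upper level and track the induced sign on each coordinate-cut as one travels once around the annulus. The parity twist forces the accumulated sign, a winding/monodromy invariant, to be inconsistent, so some coordinate-cut must carry both orientations, which in turn forces a third weight value, contradicting the two-level assumption. The main obstacle, and the step needing the most care, is making this monodromy argument \emph{independent of the chosen labelling}: a priori a non-layered graph might be rescued by some unexpected cube embedding using extra coordinates, so one must show that the winding invariant is determined by $G_k$ itself and cannot be altered by refining or enlarging the coordinate set. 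I expect this to follow from the large girth, which prevents any local shortcut that could redistribute the twist, so that on each short cycle the labelling is forced to be essentially alternating and the global twist is genuinely rigid.

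Finally, for the strengthening to induced subgraphs (Theorem~\ref{thm:induced}) I would take the cubical labelling above and append a bounded block of \emph{separating} coordinates, for example the codewords of a suitable binary code indexed by the pieces of $G_k$, chosen so that any two non-adjacent vertices whose images happen to lie at Hamming distance one are pushed apart to distance at least $2$, while genuinely adjacent pairs are untouched. Since these extra coordinates add a constant to every weight, they can be absorbed into the layer index and so affect neither the girth nor the monodromy obstruction; the care needed is to include enough separating coordinates to kill all spurious adjacencies without creating new short cycles or new same-weight collisions. Combining this with the previous paragraph then yields, for each $k$, a graph of girth at least $k$ that is an induced subgraph of a hypercube but is not layered.
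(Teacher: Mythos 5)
Your proposal has two genuine gaps, one in the core construction and one in the inducedness step, and the second is not merely a gap but a step that fails outright. For the core: the ``twisted annular gadget'' $G_k$ is never actually constructed, and the decisive claim --- that the monodromy/winding obstruction defeats \emph{every} layered labelling, not just ones compatible with the single cubical labelling you chose --- is deferred with ``I expect this to follow from the large girth.'' That deferral is the entire problem. Large girth works against you here: the cycle condition is the main constraint a cubical labelling must satisfy, so a high-girth graph admits an enormous space of inequivalent labellings, and an invariant computed from one chosen labelling need not be an invariant of the graph at all. (Note also that long even cycles and trees glued along them tend to be layered, so it is not even clear a gadget of your type exists.) The paper's proof supplies exactly the universal quantification your sketch lacks: the notion of a $t$-distance-separating pair $(T,P)$ is a statement about \emph{all} embeddings into a layer, it is established for a concrete small tree by exhaustive case analysis of layered edge-labellings, propagated to arbitrary $t$ by the leaf-adding recursion of Lemma~\ref{lem:recursion}, and finally converted into non-layeredness by a pigeonhole argument on the $k+1$ internally disjoint paths of a spindle, whose first edges would need $k+1$ distinct directions among the $k$ coordinates separating the endpoints.

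The inducedness step cannot be repaired as stated. If $f$ embeds $G$ into $Q_n$ and you append extra coordinates $c(v)$ to each vertex, then for any edge $uv$ the images differ in $|f(u)\,\triangle\, f(v)| + |c(u)\,\triangle\, c(v)|$ coordinates; preserving the edge forces $c(u)=c(v)$. Hence the appended block is constant on every edge, so constant on the connected graph $G_k$, so it changes no pairwise Hamming distance and separates nothing: any spurious adjacency survives. (The claim that the extra coordinates ``add a constant to every weight'' would additionally require a constant-weight code, but the point is moot.) This is why the paper cannot obtain Theorem~\ref{thm:induced} as a post-processing of Theorem~\ref{thm:main} and instead builds inducedness into the construction itself: a different base tree (Lemma~\ref{lem:base_case_induced}, embedded as an induced subgraph of $Q_4$), two extra properties maintained through the recursion (the image stays induced, and each direction is used at most twice on the path joining a pair in $P$), and a cyclic-shift trick in the order $\ell(Q),\ell_2,\ldots,\ell_{k-2},\ell_1,\ell(Q)$ of the labels along each spindle path, which is what prevents interior spindle vertices from landing next to vertices of the tree.
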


A key notion in the proof of Theorem~\ref{thm:main} is the
consideration of metric
properties of embeddings. That is, we consider how the graph distance (in the subgraph) relates to the actual metric distance in the hypercube. At the heart of our proof is an iterative procedure (see Lemma~\ref{lem:recursion}) that takes a starting cubical graph with particular properties and creates a cubical graph that can only be embedded in a layer if certain pairs of vertices are `far' apart. A suitable modification of the graph obtained at the end of this process can then be shown to be cubical but not layered. The proof of Theorem~\ref{thm:induced} is more involved, as we need to preserve certain additional properties during the proof (as well
as starting with a different base graph).

In Section~\ref{sec:mainthm} we prove Theorem~\ref{thm:main}. The further work required for the proof of Theorem~\ref{thm:induced} is given in Section~\ref{sec:indthm}. We use standard graph-theoretic notation -- see for example~\cite{bollobas2013modern}. For an edge $xy$ of the hypercube, we say its \emph{direction} is the coordinate in
which $x$ and $y$ differ.

\section{Proof of Theorem~\ref{thm:main}}\label{sec:mainthm}

We first describe a relationship between particular embeddings of a graph $G$ into a hypercube and certain edge-labellings of $G$. This will be useful for us later in showing whether $G$ is cubical or layered. 

We start with some notation. An \emph{embedding} $f$ of a graph $G$ into a graph $H$ (which, for our purposes, will be taken to be the hypercube $Q_n$ or a layer of $Q_n$) is a graph isomorphism from $G$ to a subgraph of $H$. 

As noted by Havel and Morávek~\cite{havel1972}, one may characterise cubical graphs in terms of edge-labellings; in the following definitions we slightly abuse notation, using `cubical' and 'layered' as properties of certain edge-labellings. For any graph $G$, 
an edge-labelling $\chi:E(G) \rightarrow \mathbb{N}$ of $G$ is called \emph{cubical} if it has the following properties.
	\begin{enumerate}[(i)]
		\item For any set of edges $e_1,e_2,\ldots, e_k$ forming a cycle, each possible label occurs an even number of times in $\chi(e_1), \chi(e_2), \ldots, \chi(e_k)$.
		\item For any (non-empty) set of edges $e_1,e_2,\ldots, e_k$ forming a path, some label occurs an odd number of times in $\chi(e_1), \chi(e_2), \ldots, \chi(e_k)$.
	\end{enumerate}

\begin{lemma}[Havel and Morávek~\cite{havel1972}]  \label{lem:labelling_for_cube}
    Let $G$ be a graph. An embedding $f$ of $G$ into a hypercube $Q_n$ induces a cubical edge-labelling  $\chi:E(G) \rightarrow \mathbb{N}$, where $\chi(xy)$ is the direction of the edge $f(x)f(y)$ in $Q_n$. Conversely, if $G$ admits a cubical edge-labelling  $\chi$ then there exists a embedding $f$ of $G$ into a hypercube such that $\chi(xy)$ is the direction of the edge $f(x)f(y)$.
\end{lemma}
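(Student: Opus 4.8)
The plan is to translate the two combinatorial label conditions into the language of the $\mathbb{F}_2$ cycle and cut spaces of $G$. For each potential label $d\in\mathbb{N}$, let $\chi_d\in\mathbb{F}_2^{E(G)}$ be the indicator vector of the edges receiving label $d$, so that $\chi_d(e)=1$ precisely when $\chi(e)=d$. Reading the two conditions modulo $2$, condition~(i) says that for every cycle $C$ and every $d$ we have $\sum_{e\in C}\chi_d(e)=0$, i.e.\ each $\chi_d$ is orthogonal to the cycle space of $G$; since the cycle space is spanned by the cycles, this is equivalent to $\chi_d$ lying in the cut space. Condition~(ii) says that for every nonempty path $P$ there is some $d$ with $\sum_{e\in P}\chi_d(e)=1$. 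The lemma then amounts to matching these two facts with, respectively, the \emph{existence} and the \emph{injectivity} of the embedding.

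For the forward direction, suppose $f$ embeds $G$ into $Q_n$ and set $\chi(xy)$ to be the direction of $f(x)f(y)$. Traversing a cycle $e_1,\dots,e_k$ in the cube returns to the starting vertex, so the value of every coordinate is unchanged; as an edge in direction $d$ toggles coordinate $d$, each direction is used an even number of times, giving~(i). For a nonempty path with (distinct) endpoints $x,y$, injectivity of $f$ gives $f(x)\neq f(y)$, so $f(x)$ and $f(y)$ differ in some coordinate $d$, which must therefore have been toggled an odd number of times along the path; this is~(ii).

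For the converse, I would build the embedding coordinate by coordinate. By~(i) each $\chi_d$ lies in the cut space, so there is a potential $\phi_d\colon V(G)\to\mathbb{F}_2$ with $\phi_d(u)+\phi_d(v)=\chi_d(e)$ for every edge $e=uv$; one constructs $\phi_d$ on a spanning forest and extends it, the consistency on the remaining edges being exactly orthogonality to the fundamental cycles. Only finitely many labels $d_1,\dots,d_m$ occur, and setting $f(v)=\bigl(\phi_{d_1}(v),\dots,\phi_{d_m}(v)\bigr)$ makes $f(u)$ and $f(v)$ differ in exactly the single coordinate corresponding to $\chi(uv)$, so edges map to edges of the correct direction. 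It remains to check that $f$ is injective. If $u\neq v$ lie in the same component, a nonempty $u$--$v$ path has, by~(ii), some label occurring an odd number of times, forcing $\phi_{d_i}(u)\neq\phi_{d_i}(v)$ for that label and hence $f(u)\neq f(v)$.

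The main obstacle is separating vertices lying in \emph{different} components, where~(ii) gives no information. I would resolve this by appending a few extra coordinates: each $\phi_d$ is determined only up to adding a constant on each component, and one may moreover introduce coordinates indexed by fresh labels (used by no edge) that are constant on each component. Choosing these constants so that distinct components receive distinct binary addresses separates all remaining pairs, and creates no new edge directions since edges never cross between components. This yields an injective $f$ into some $Q_N$ realising $\chi$, completing the converse.
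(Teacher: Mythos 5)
Your proof is correct. Note that the paper does not prove this lemma at all --- it is quoted as a known result of Havel and Mor\'avek --- so there is no in-paper argument to compare against; your write-up is a valid self-contained proof of the cited result. The two directions check out: the forward direction correctly uses that a cycle in $Q_n$ toggles every coordinate an even number of times and that injectivity of the embedding forces some coordinate to be toggled an odd number of times along any path; the converse correctly identifies condition (i) with each colour class $\chi_d$ lying in the cut space (the orthogonal complement of the cycle space over $\mathbb{F}_2$), produces the potentials $\phi_d$, and uses condition (ii) to get injectivity within each connected component. Your handling of the one genuine subtlety --- that condition (ii) says nothing about vertices in different components --- is also right: appending fresh coordinates that are constant on each component and assign distinct addresses to distinct components restores injectivity without creating any new edge directions, since no edge of $G$ joins two components. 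The only implicit hypothesis worth flagging is finiteness of $G$ (so that only finitely many labels occur and the cycle-space/cut-space duality applies), which is the setting of the paper.
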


Axenovitch, Martin and Winter~\cite{axenovich2023graphs} made the
very useful observation that one may characterise layered graphs
in a similar way. For a graph $G$, we call an edge-labelling $\chi:E(G) \rightarrow \mathbb{N}$ of $G$  \emph{layered} if it is cubical and has the additional property that for any for any two edges of the same colour $c$, any path between them with no edges of colour $c$ contains an even number of edges.

\begin{lemma}[Axenovitch, Martin and Winter~\cite{axenovich2023graphs}] \label{lem:labelling_for_layer}
  Let $G$ be a graph. An embedding $f$ of $G$ into a layer of a hypercube $Q_n$ induces a layered edge-labelling  $\chi:E(G) \rightarrow \mathbb{N}$,  where $\chi(xy)$ is the direction of the edge $f(x)f(y)$ in $Q_n$. Conversely, if $G$ admits a layered edge-labelling  $\chi$ then there exists a embedding $f$ of $G$ into a layer of a hypercube such that $\chi(xy)$ is the direction of the edge $f(x)f(y)$.
\end{lemma}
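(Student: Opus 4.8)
The plan is to prove both implications, the forward one being short and the converse carrying the content. For the forward direction, suppose $f$ embeds $G$ into the $k$th layer of $Q_n$. The key structural fact is that this layer is bipartite: every edge flips a single coordinate and so changes the number of $1$s by exactly $\pm 1$, forcing it to run between level $k$ and level $k+1$. Viewing $f$ also as an embedding into $Q_n$, Lemma~\ref{lem:labelling_for_cube} already gives that $\chi$ is cubical, so it remains to check the extra parity property. Given two edges of colour $c$ and a path $P$ between them using no colour-$c$ edge, the coordinate $c$ is unchanged along $f(P)$, so its two endpoints share a common value of coordinate $c$. For an endpoint of any colour-$c$ edge, the value of coordinate $c$ determines its level (the endpoint with coordinate $c$ equal to $1$ is the one with more $1$s, hence at level $k+1$). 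Thus both endpoints of $P$ lie at the same level, and since the layer is bipartite into its two levels, $P$ has even length, as required.

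For the converse, start from a layered labelling $\chi$. By Lemma~\ref{lem:labelling_for_cube} there is an embedding $f_0 \colon G \to Q_n$ realising $\chi$, and I will modify $f_0$ into an embedding into a single layer. I would first reduce to the case that $G$ is connected (for the general case one embeds each component and then uses extra coordinates, carried by no edge, to shift all components to a common level and to keep their images disjoint). For connected $G$ the only freedom in an embedding realising $\chi$ is the choice of image of a root vertex, i.e.\ one may independently complement any set of coordinates; complementing coordinate $c$ preserves all edge-directions but alters weights. Writing $A \sqcup B$ for the bipartition of $G$, the goal becomes to choose which coordinates to complement so that the resulting weight function takes only two consecutive values --- equivalently, so that every edge increases in weight by $1$ on passing from its $A$-endpoint to its $B$-endpoint, which for connected bipartite $G$ forces the weight to be constant on $A$ and constant on $B$.

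The heart of the argument is the claim that, for each colour $c$, all colour-$c$ edges $uv$ with $u \in A$ share a single value of $f_0(u)_c$; granting this, complementing exactly the colours for which this common value is $1$ achieves the goal above and places the image in a layer. To prove the claim, consider the components of $G$ with all colour-$c$ edges deleted: coordinate $c$ is constant on each such component $K$, say equal to $\operatorname{val}(K)$, and contracting each component yields a connected multigraph $H$ whose edges are the colour-$c$ edges, on which $\operatorname{val}$ is a proper $2$-colouring. The layered condition applied inside a component $K$ says that any two colour-$c$-edge endpoints lying in $K$, being joined by a colour-$c$-avoiding path of even length, lie in the same class of $A \sqcup B$; this lets one define a second function $\pi(K) \in \{A,B\}$, and since each colour-$c$ edge joins opposite classes, $\pi$ is also a proper $2$-colouring of $H$. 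As $H$ is connected, its proper $2$-colouring is unique up to a global swap, so $\operatorname{val}$ and $\pi$ determine one another; this is exactly the asserted consistency of the value $f_0(u)_c$.

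I expect the main obstacle to be precisely this last step: passing from the purely \emph{local} information supplied by the layered condition (an even-length constraint within each colour-$c$-avoiding component) to the \emph{global} consistency needed to complement whole coordinates at once. The device of the contracted multigraph $H$, together with the uniqueness of the $2$-colouring of a connected bipartite graph, is what bridges this gap; everything else (the bipartiteness of a layer, the deduction that balanced edge-weights force constancy on each class, and the bookkeeping for disconnected $G$) is routine.
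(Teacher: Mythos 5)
The paper gives no proof of this lemma at all: it is stated as a citation of Axenovich, Martin and Winter~\cite{axenovich2023graphs}, so there is no internal argument to compare yours against. Judged on its own, your proof is correct and complete. The forward direction is exactly the right computation: a layer is bipartite between its two levels, coordinate $c$ is frozen along a colour-$c$-avoiding path, and the value of coordinate $c$ at an endpoint of a colour-$c$ edge determines its level, so the path joins two vertices of the same level and has even length. The converse is the substantive part, and your argument is sound. The reduction to connected $G$ works (and note that colours shared between different components cause no conflict, since ``colour'' literally means the coordinate direction realising it, so the per-component embeddings already agree on shared colours; the private padding blocks then equalise levels and force disjointness). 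For connected $G$, starting from the embedding $f_0$ supplied by Lemma~\ref{lem:labelling_for_cube}, the only decision is which coordinates to complement, and your key claim --- that for each colour $c$ all colour-$c$ edges $uv$ with $u \in A$ agree on the value $f_0(u)_c$ --- is correctly established: coordinate $c$ is constant on each component of $G$ minus the colour-$c$ edges; the contracted multigraph $H$ has no loops (a loop would give a cycle using colour $c$ exactly once, violating cubicality); $\operatorname{val}$ is therefore a proper $2$-colouring of $H$; the layered condition makes $\pi$ well defined and also proper; and connectivity of $H$ forces the two $2$-colourings to determine one another. Complementing the coordinates whose common value is $1$ then makes every edge increase weight from its $A$-endpoint to its $B$-endpoint, which for connected bipartite $G$ pins the image into a single layer, with all edge-directions untouched. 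Two degenerate points are worth a sentence in a final write-up, though both are harmless: coordinates carried by no edge are constant on a connected image and merely shift the common level, and the instance of the layered condition with the two edges equal is vacuous, since a colour-$c$-avoiding path cannot join the two endpoints of a colour-$c$ edge.
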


We now introduce an important notion that will be used in building our graphs of large girth that are cubical but not layered. In the
following, we stress that, for two vertices $x$ and $y$ of a cube,
we always write $d(x,y)$ to denote the distance from $x$ to $y$ in the cube itself -- never the distance inside a particular subgraph.

Let $G$ be a graph and let $P$ be a set of unordered pairs of vertices in $G$. (We do not insist that these pairs are disjoint.)  For a given positive integer $t$, we say that $(G,P)$ is \emph{$t$-distance-separating} if the following hold:
    \begin{enumerate}[(i)]
        \item There exists some $n$ and some embedding $f$ of $G$ into $Q_n$ such that $d(f(x),f(y)) = t$ for all pairs $\{x,y\} \in P$; and
        \item For any embedding $g$ of $G$ into a layer of a hypercube, there is some pair $\{x,y\} \in P$ with $d(g(x),g(y)) = t+2$.
    \end{enumerate}

The following is the key lemma for the proof of Theorem~\ref{thm:main}. The reader may worry that it concerns only trees, and of course all trees are layered -- but in fact in our construction we use trees until the final step, and it is only then
that we move away from trees.

\begin{lemma} \label{lem:recursion}
    Let $T$ be a tree and let $P$ be a set of pairs of vertices in $T$. Suppose that $(T,P)$ is $t$-distance-separating, for
    some value of $t$. 
    Then there exists a tree $T'$ and a set $P'$ of pairs of vertices in $T'$ such that $(T',P')$ is $(t+2)$-distance-separating.
\end{lemma}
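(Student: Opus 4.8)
The plan is to amplify a $t$-distance-separating pair $(T,P)$ into a $(t+2)$-distance-separating pair $(T',P')$ by building $T'$ as several copies of $T$ glued together along a gadget that forces a parity obstruction. Let me think about what the two conditions demand. Condition (i) only asks for \emph{some} cube-embedding realising distance exactly $t+2$ on each pair of $P'$; this should follow routinely from the embedding witnessing (i) for $(T,P)$, by taking disjoint copies and extending with fresh coordinates (so distances add up). The real content is condition (ii): \emph{every} layer-embedding of $T'$ must stretch some pair of $P'$ to distance $t+4$. So the design of $T'$ must be such that the hypothesised property for $(T,P)$ — that every layer-embedding stretches some $P$-pair to $t+2$ — can be parlayed into a stretch of $t+4$ for a longer pair in $T'$.

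First I would recall the parity mechanism behind layered labellings (Lemma~\ref{lem:labelling_for_layer}). In a cube-embedding, $d(f(x),f(y))$ equals the number of colours appearing an odd number of times along the $x$--$y$ path in $T$ (since $T$ is a tree, there is a unique path, and the distance is just the Hamming distance of the endpoints). In a \emph{layer}-embedding, there is an extra constraint: summing the $\pm1$ weight changes along any path must respect the layer, so the parity of the path-length is tied to the parity of the coordinate-sum change. The key leverage is that in a layer, if two edges share a colour $c$, then the number of edges between them (avoiding colour $c$) is even. I would exploit this to show that a layer-embedding of $T$ where the $P$-pairs are forced to distance $t+2$ cannot simultaneously satisfy an additional parity condition that I will engineer into $T'$.

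The construction I would try: take two copies $T_1,T_2$ of $T$, with copied pair-sets $P_1,P_2$, and join a distinguished vertex of $T_1$ to a distinguished vertex of $T_2$ by a path (or a small tree gadget) chosen so that a new long-range pair $\{u,v\}$ spanning from deep inside $T_1$ to deep inside $T_2$ has cube-distance $t+2$ in the good embedding. Set $P'$ to be $\{u,v\}$ together with enough auxiliary pairs to propagate the argument. The heart of the proof — the step I expect to be the main obstacle — is arguing (ii): given any layer-embedding $g$ of $T'$, restrict it to each copy $T_i$; by the inductive hypothesis (ii), some $P_i$-pair is stretched to $t+2$ in each copy. I then need to show that the layer parity constraint propagated along the connecting gadget forces these two independent stretches to \emph{combine}, so that the spanning pair $\{u,v\}$ realises distance $t+4$ rather than $t+2$. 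Getting the parities to line up so that stretches add rather than cancel is the delicate part, and it is presumably why the connecting gadget and the choice of $P'$ must be made carefully.

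Concretely, I would track, for each colour $c$, the parity of occurrences of $c$ along the $u$--$v$ path and relate it via the layered condition to the parities along the $P_i$-paths inside the copies. The intended payoff is an identity of the form
\begin{equation*}
 d\bigl(g(u),g(v)\bigr) \;=\; d_1 + d_2 + (\text{gadget contribution}),
\end{equation*}
where $d_i$ is the stretch inside copy $i$, and the layered parity forces the gadget contribution to have the right sign so that when both $d_i=t+2$ we land at $t+4$ on some pair. The two places I would be most worried about are: first, ensuring $T'$ remains a \emph{tree} (so that ``the path'' between any two vertices is unique and the distance formula stays clean) while still encoding the cross-copy constraint — this rules out simply adding an edge to create a cycle, and forces the coupling to be done purely through shared colours under the layered condition rather than through extra topology; and second, verifying condition (i) gives genuine \emph{equality} $d=t+2$ (not merely $\le t+2$) on every pair of $P'$, which constrains how the fresh coordinates and the copy-embeddings may be combined.
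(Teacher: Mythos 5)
Your proposal correctly isolates condition (ii) as the heart of the matter, and correctly recalls that for a tree embedded in a cube, $d(g(u),g(v))$ is the number of directions occurring an odd number of times on the unique $u$--$v$ path. But the construction you sketch (two copies of $T$ joined by a gadget, with a fixed spanning pair $\{u,v\}$) has a gap that I do not believe can be repaired along the lines you describe. The inductive hypothesis only tells you that \emph{some} pair of $P_i$ is stretched in each copy, and you have no control over which one; since the stretched pairs' paths need not meet the fixed $u$--$v$ path at all, their stretches exert no influence on $d(g(u),g(v))$. If instead you arrange the $u$--$v$ path to contain the paths of the relevant pairs, the arithmetic goes wrong: your proposed identity $d(g(u),g(v)) = d_1 + d_2 + (\text{gadget contribution})$ with $d_1 = d_2 = t+2$ would give roughly $2t+4$, not $t+4$, and it is likewise inconsistent with condition (i), which demands $d(f(u),f(v))$ be \emph{exactly} $t+2$ in the good embedding --- impossible for a pair ``deep inside'' the two copies if, as you say, the copies are embedded with fresh coordinates ``so distances add up''. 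So neither reading of your coupling mechanism produces the required exact values, and the ``auxiliary pairs to propagate the argument'' are never specified.

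The idea you are missing is much more local, and needs only one copy of $T$. The paper attaches $s = t+4$ new leaves to \emph{every} vertex of $T$, and takes $P'$ to consist of all pairs (leaf of $x$, leaf of $y$) ranging over $\{x,y\} \in P$. For (i), give every leaf edge a fresh direction: each such leaf pair then sits at distance exactly $(t+2)$. For (ii), restrict a layer embedding $g$ of $T'$ to $T$ and obtain, by hypothesis, a pair $\{x,y\} \in P$ with $d(g(x),g(y)) = t+2$; let $I$ be the set of coordinates where $g(x)$ and $g(y)$ differ. The key observation is a pigeonhole on directions at a vertex: the cube edges incident to $g(x)$ all have distinct directions, so among the $t+4$ leaf edges at $x$ at least two avoid $I$, and similarly at $y$; choosing a leaf $u$ of $x$ and a leaf $v$ of $y$ whose directions avoid $I$ and are distinct from each other gives $d(g(u),g(v)) = (t+2)+2$ exactly. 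Note how this sidesteps your difficulty about not knowing which pair gets stretched: $P'$ contains leaf pairs for \emph{every} pair of $P$, so whichever pair the hypothesis stretches, its leaf pairs are available, and the stretch is transferred by adding one edge at each end rather than by coupling two copies through a parity gadget.
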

\begin{proof}
    Let $s = t+4$.      
    Let $T'$ be the tree obtained from $T$ as follows. For each vertex $x$ in $T$, we 
    add $s$ new leaves $x_1,\ldots, x_s$ adjacent to $x$. Thus $|T'| = (s+1)|T|$. Let $P_{xy}$ be the set of all pairs $\{ x_i,y_j \}$ where $1 \le i,j \le s$, and let $P'$ be the union of $P_{xy}$ taken over all pairs $\{x,y\}$ in $P$. 
 
    We must check that $(T',P')$ is $(t+2)$-distance-separating.

    \begin{claim} \label{claim:recursion_cubical}
        There exists an $n$ and an embedding $f$ of $T'$ into $Q_n$ such that  for all pairs $\{x,y\} \in P'$ we have ${d(f(x),f(y)) = t+2}$.
    \end{claim}
    \begin{claimproof}  

            We first define a cubical edge-labelling of $T'$ in order to apply Lemma~\ref{lem:labelling_for_cube} to obtain an embedding $f$ of $T'$ into some hypercube. We then show that $f$ satisfies the required distance properties.
            
            As $T$ is $t$-distance-separating, there is some $n$ and some isomorphism $g$ from $T$ to an induced subgraph of $Q_n$ such that $d(g(x),g(y)) = t$ for all pairs $\{x,y\} \in P$. By Lemma~\ref{lem:labelling_for_cube}, the embedding $g$ corresponds to a cubical edge-labelling of $T$. Let the edges of $T'\cap T$ inherit these labels. 

            Give each edge in $E(T')\setminus E(T)$ a unique new label distinct from any other label. It is straightforward to verify that the resulting edge-labelling is cubical: $T'$ does not contain any cycles so the first condition is satisfied automatically. Any path containing an edge of $T' \setminus T$ has a label used exactly once, namely the label on the new edge. Any path not containing a new edge is entirely in $T$, for which the labelling was cubical. 
            
            In total we introduce $s|T|$ new labels. Let $f$ be the corresponding embedding of $T'$ into the hypercube $Q_{n+s|T|}$ given by Lemma~\ref{lem:labelling_for_cube}. 
            
            Let $\{u,v\} \in P'$. We must show that $d(f(u),f(v)) = t+2$. We have that $\{u,v\}$ is in $P_{xy}$ for some $\{x,y\} \in P$ with $u$ adjacent to $x$ and $v$ adjacent to $y$. We see that $f(u)$ differs from $f(x)$ in one coordinate that is the label used on $ux$ and similarly,  $f(v)$ differs from $f(y)$ in one coordinate that is the label used on $vy$. By construction, these labels are distinct from each other and from the coordinates that $g(x)$ and $g(y)$ differ in (which are the same as the coordinates that  $g(x)$ and $g(y)$ differ in). Therefore, 
            \[d(f(u),f(v)) = d(f(x),f(y)) + 2 = d(g(x),g(y)) + 2 = t+2.\]
    \end{claimproof}

    \begin{claim} \label{claim:recursion_layer}
        For any embedding $g$ of $T'$ into a layer of a hypercube, there is some pair $\{x,y\} \in P'$ with $d(g(x),g(y)) = t+4$.
    \end{claim}
    \begin{claimproof}
        Let $g$ be an embedding of $T'$ into a layer $L$ of a hypercube. As $T$ is $t$-distance-separating, we know that there is some $\{x,y\} \in P$ with $d(g(x),g(y)) = t+2$. We will show that there is some $\{u,v\} \in P_{xy}$ for this $\{x,y\}$ with $d(g(u),g(v)) = t+4$. 

        Let $I$ be the set of coordinates where $g(x)$ and $g(y)$ differ, so $|I| = d(g(u),g(v)) = t+2$. Note that the directions of edges adjacent to $g(x)$ must all be distinct and thus we can find at least $s - |I| = 2$ vertices adjacent to $x$ that are not in $T$ and do not use a direction in $I$ -- let $u$ be one of these. Similarly, we can find $2$ vertices adjacent to $y$ that are not in $T$ and do not use a direction in $I$ -- call these $v_1$ and $v_2$. Take $v \in \{v_1,v_2\}$ such that $yv$ has a label different from the label on $xu$. We see that 
        \[d(g(u),g(v)) = d(g(x),g(y)) + 2 = t+4.\]
    \end{claimproof}
    
    Claims~\ref{claim:recursion_cubical}~and~\ref{claim:recursion_layer} together give that $(T',P')$ is $(t+2)$-distance-separating, completing the proof of 
    Lemma~\ref{lem:recursion}. 
    \end{proof}

We now turn to the `base case'.

\begin{lemma} \label{lem:base_case}
    Let $P_6$ be the path on $6$ edges with vertices labelled $v_0,v_1,\ldots,v_6$, and let $P$ be the pairs of vertices $\{ \{v_0,v_4\}, \{v_1,v_5\}, \{v_2,v_6\}\}$. Then $(P_6,P)$ is $2$-distance-separating.
\end{lemma}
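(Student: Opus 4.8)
The plan is to verify the two defining conditions of $2$-distance-separating directly for $(P_6,P)$, where $P=\{\{v_0,v_4\},\{v_1,v_5\},\{v_2,v_6\}\}$.

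For the first condition, I want an embedding $f$ of $P_6$ into some $Q_n$ with $d(f(v_i),f(v_{i+4}))=2$ for each of the three pairs. Since all three pairs are at path-distance $4$ in $P_6$, the natural idea is to choose a cubical edge-labelling $\chi$ of the six edges $v_0v_1,\dots,v_5v_6$ so that along each window of four consecutive edges, exactly two colours each appear twice (forcing cube-distance $2$). A clean choice is to use just two colours in a repeating pattern, say $\chi(v_iv_{i+1})=i \bmod 2$, giving the colour sequence $0,1,0,1,0,1$. Then each length-$4$ subpath $v_i v_{i+1}v_{i+2}v_{i+3}v_{i+4}$ uses colour $0$ twice and colour $1$ twice, so $f(v_i)$ and $f(v_{i+4})$ agree on the net displacement and differ in exactly the coordinates where a colour appears an odd number of times. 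I would check that this labelling is cubical via Lemma~\ref{lem:labelling_for_cube}: condition~(i) is automatic since $P_6$ is a tree (no cycles), and condition~(ii) requires every nonempty subpath to have some colour of odd multiplicity, which I must verify holds for this two-colour pattern. If a strict two-colour labelling fails condition~(ii) on some even-length subpath, I would instead use three or four colours arranged so that each of the three designated windows still nets exactly distance $2$ while every subpath has an odd colour; the flexibility in \cref{lem:labelling_for_cube} makes this easy to arrange.

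For the second condition, I must show that for every embedding $g$ of $P_6$ into a layer of a hypercube, at least one of the three pairs lands at cube-distance $t+2=4$. By Lemma~\ref{lem:labelling_for_layer}, such an embedding corresponds to a layered edge-labelling $\chi$ of $P_6$: it is cubical, and additionally any two edges of the same colour are joined (along the colour-avoiding path between them) by an even number of edges. The cube-distance between $v_i$ and $v_{i+4}$ equals the number of colours appearing an odd number of times among the four edges of that subpath; since four edges are involved, this parity count is $4$, $2$, or $0$, and the layered condition will be the tool forcing it up to $4$ for some window. The heart of the argument is a short parity/counting analysis of which colour-repetition patterns are compatible with the layered constraint across all six edges simultaneously, showing that one cannot keep all three windows at distance $2$.

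I expect the second condition to be the main obstacle. The difficulty is that a single layered labelling must simultaneously control all three overlapping windows $\{v_0,v_4\}$, $\{v_1,v_5\}$, $\{v_2,v_6\}$, and the layered parity constraint couples repeated colours across the whole path rather than within one window. The concrete plan is to suppose for contradiction that all three windows have cube-distance $2$, meaning each window has exactly one repeated colour (a colour appearing twice among its four edges). I would then case-analyse the possible colour-coincidence patterns among the six edges, using the layered condition to pin down the parity of the gap between equal-coloured edges, and derive that the forced repetitions in the three windows are mutually inconsistent, so some window must instead have all four colours distinct and hence cube-distance $4$.
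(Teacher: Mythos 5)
Your plan for the first condition contains a genuine error, not merely an unverified step. The cube-distance between $f(v_i)$ and $f(v_{i+4})$ equals the number of labels occurring an \emph{odd} number of times on the four edges between them. So your target pattern --- ``exactly two colours each appear twice (forcing cube-distance 2)'' --- in fact forces $d(f(v_i),f(v_{i+4}))=0$, not $2$; and your alternating labelling $0,1,0,1,0,1$ is not cubical at all, since the subpath $v_0v_1v_2v_3v_4$ has every colour with even multiplicity, so by Lemma~\ref{lem:labelling_for_cube} it corresponds to no embedding (it would identify $v_0$ with $v_4$). To get distance exactly $2$ a window needs exactly two colours of odd multiplicity, e.g.\ multiplicities $(2,1,1)$ --- which is what you yourself (correctly, and inconsistently with the above) assume later when discussing the second condition. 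Your fallback sentence addresses only the failure of the cubical condition, not this mistaken distance criterion, and never exhibits a valid labelling. One that works, and is exactly the paper's embedding into $Q_3$, is the label sequence $1,2,1,3,1,2$: every window then has multiplicity pattern $(2,1,1)$, giving distance $2$, and the labelling is easily checked to be cubical.

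For the second condition your strategy coincides with the paper's: pass to a layered edge-labelling via Lemma~\ref{lem:labelling_for_layer} and show that some window must have four labels of odd multiplicity, hence distance $4$. But you leave the entire case analysis --- which is the actual content of the lemma --- as a promise, so as it stands the lemma is not proved. The paper's Claim~\ref{claim:labelP_6} executes it by forcing labels edge by edge: WLOG $\chi(v_0v_1)=1$ and $\chi(v_1v_2)=2$; then $v_2v_3$ cannot repeat $2$ (cubical path condition on $v_1v_2v_3$) nor $1$ (layered condition: the single edge $v_1v_2$ separating the two equal-coloured edges is a path of odd length), so it gets a new label $3$; continuing, either some window already has four distinct labels or the sequence is forced to be $1,2,3,1,2$, after which $v_5v_6$ can be none of $1,2,3$, so the window $\{v_2,v_6\}$ has four distinct labels. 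To complete your own sketch you would also need two points you currently gloss over: first, distance $2$ can arise from the multiplicity pattern $(3,1)$ as well as $(2,1,1)$, so ``exactly one repeated colour'' is not automatic; second, the coupling you need is that the layered condition forces equal-coloured edges to be separated by an even number of edges, so within any window the only possible repeat is between its first and fourth edges (this also kills $(3,1)$). Granting that, a clean finish exists: if all three windows had distance $2$, then $\chi(v_0v_1)=\chi(v_3v_4)$, $\chi(v_1v_2)=\chi(v_4v_5)$, $\chi(v_2v_3)=\chi(v_5v_6)$ with these three colours pairwise distinct, and the full six-edge path would use every colour an even number of times, contradicting the cubical path condition.
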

\begin{proof}
	Let $f$ be the following embedding of $P_6$ into $Q_3$:
	\[
	v_0 \mapsto \emptyset, \qquad
	v_1 \mapsto \{1\}, \qquad
	v_2 \mapsto \{1,2\}, \qquad
	v_3 \mapsto \{2\}, \]
	\[
	v_4 \mapsto \{2,3\}, \qquad
	v_5 \mapsto \{1,2,3\}, \qquad
	v_6 \mapsto \{1,3\}.
	\]
	It is easy to check that $d(f(x),f(y)) = 2$ for all $\{x,y\} \in P$, and so the first condition is satisfied.
	
	To prove the second condition, we will use the following claim.
 
 \begin{claim} \label{claim:labelP_6}
     Let $\chi:E(P_6) \rightarrow \mathbb{N}$ be any layered edge-labelling of $P_6$. Then for some $0 \le i \le 2$, there are four distinct labels on the edges $v_iv_{i+1}, v_{i+1}v_{i+2}, v_{i+2}v_{i+3}, v_{i+3}v_{i+4}$.
 \end{claim}
 \begin{claimproof}[Proof of Claim]
 This is just a routine case-check; we write it out merely for
 completeness.
 
 Recall that for an edge-labelling to be layered the following three conditions must hold:
 \begin{enumerate}[(i)]
     	\item For any set of edges $e_1,e_2,\ldots, e_k$ forming a cycle, each possible label occurs an even number of times in $\chi(e_1), \chi(e_2), \ldots, \chi(e_k)$, \label{condition:cycle}
		\item For any (non-empty) set of edges $e_1,e_2,\ldots, e_k$ forming a path, some label occurs an odd number of times in $\chi(e_1), \chi(e_2), \ldots, \chi(e_k)$, \label{condition:path}
        \item for any for any two edges of the same colour $c$, any path between them that has no edges of colour $c$ has an even length. \label{condition:odd_distance}
 \end{enumerate}

     Without loss of generality, let edge $v_0v_1$ have label $1$. By condition~\ref{condition:path} on the path $v_0v_1v_2$, edge $v_1v_2$ has a different label to $1$, say $2$. 
     
     Edge $v_2v_3$ cannot have label $2$ by condition~\ref{condition:path} on the path $v_1v_2v_3$, and cannot have label $1$ by condition~\ref{condition:odd_distance} with edge $v_0v_1$. So it must have a new label, say $3$.

     Edge $v_3v_4$ cannot have label $3$ by condition~\ref{condition:path} on the path $v_2v_3v_4$, and cannot have label $2$ by condition~\ref{condition:odd_distance} with edge $v_1v_2$. If it had a label other than $1,2,3$ then edges $v_0v_{1}, v_{1}v_{2}, v_{2}v_{3}, v_{3}v_{4}$ would all have distinct labels and we would be done. So we may suppose that $v_2v_3$ has label $1$.

    Edge $v_4v_5$ cannot have label $1$ by condition~\ref{condition:path} on the path $v_3v_4v_5$, and cannot have label $3$ by condition~\ref{condition:odd_distance} with edge $v_2v_3$. If it had a label other than $1,2,3$ then edges $v_{1}v_{2}, v_{2}v_{3}, v_{3}v_{4},v_4v_5$ would all have distinct labels and we would be done. So we may suppose that $v_2v_3$ has label $2$. See Figure~\ref{fig:p6} for an illustration of the labelling forced up to this point.

    \begin{figure}[h]
    \centering
    \includegraphics[scale=1.1]{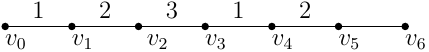}
    \caption{The edge-labels forced on $P_6$ during the proof of Lemma~\ref{lem:base_case}.}\label{fig:p6}
    \end{figure}

    Edge $v_5v_6$ cannot have label $2$ by condition~\ref{condition:path} on the path $v_4v_5v_6$, and cannot have label $1$ by condition~\ref{condition:odd_distance} with edge $v_3v_4$. It also cannot have label $3$ by condition~\ref{condition:path} on the path $v_0v_1v_2v_3v_4v_5v_6$.  Therefore it must have a label distinct from $1,2,3$. Then the edges $v_{2}v_{3}, v_{3}v_{4},v_4v_5, v_5v_6$ all have distinct labels and we are done.
 \end{claimproof}

 Let $f$ be an embedding of $P_6$ into a layer $L$ of a hypercube. Let  $\chi: E(G) \rightarrow \mathbb{N}$ be the edge-labelling where $\chi(uv)$ is the direction of the edge $f(u)f(v)$, which is layered by Lemma~\ref{lem:labelling_for_layer}.  By Claim~\ref{claim:labelP_6}, there is some pair of points $\{x,y\} \in P$ where there are four distinct labels on the edges of the path joining them. Therefore, for this $x,y$ we have $d(f(x),f(y)) = 4$.
\end{proof}

Lemmas~\ref{lem:recursion} and~\ref{lem:base_case} together immediately give the following corollary.
\begin{cor}\label{cor:main}
    For all even $t \ge 2$, there exists a tree $T$ and a  set $P$ of pairs of vertices in $T$ such that $(T,P)$ is $t$-distance-separating.
\end{cor}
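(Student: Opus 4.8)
The plan is a straightforward induction on the even integer $t$, with Lemma~\ref{lem:base_case} supplying the base case and Lemma~\ref{lem:recursion} driving the inductive step. I would first observe that the two lemmas are designed precisely to chain together: the base case produces a tree (namely $P_6$) together with a pair-set that is $2$-distance-separating, while the recursion takes any $t$-distance-separating pair $(T,P)$ in which $T$ is a tree and returns a $(t+2)$-distance-separating pair $(T',P')$ in which $T'$ is again a tree. The crucial feature is that the recursion preserves the hypothesis that the first coordinate is a tree, so it can be applied repeatedly without ever leaving the class of trees.

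Concretely, for the base case $t=2$ I would simply invoke Lemma~\ref{lem:base_case}, which asserts that $(P_6,P)$ is $2$-distance-separating for the pair-set $P$ specified there; since $P_6$ is a tree, this is exactly the statement of the corollary for $t=2$. For the inductive step I would assume the result for some even $t \ge 2$, fix a tree $T$ and a pair-set $P$ with $(T,P)$ being $t$-distance-separating, and apply Lemma~\ref{lem:recursion} to obtain a tree $T'$ and a pair-set $P'$ with $(T',P')$ being $(t+2)$-distance-separating. As $t$ runs over the even integers starting at $2$, the value $t+2$ runs over the even integers starting at $4$, so the base case together with the step covers every even $t \ge 2$.

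I do not anticipate any genuine obstacle: essentially all of the content resides in the two lemmas, and the corollary is merely their formal combination. The one point worth noting — and it is immediate — is that Lemma~\ref{lem:recursion} returns a \emph{tree}, so that the hypothesis of the lemma (that the ambient graph be a tree) is maintained at every stage of the iteration; this is exactly what legitimises the unbounded recursion and lets us reach arbitrarily large even $t$.
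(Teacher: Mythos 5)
Your proof is correct and is exactly the paper's argument: the paper derives Corollary~\ref{cor:main} immediately from Lemmas~\ref{lem:base_case} and~\ref{lem:recursion} by the same induction, with the base case $t=2$ given by $(P_6,P)$ and the inductive step given by the recursion, which (as you rightly emphasise) outputs a tree and so can be iterated.
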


Armed with Corollary~\ref{cor:main}, we can now prove Theorem~\ref{thm:main} which we restate below for convenience.
\main*
\addtocounter{theorem}{1}
\begin{proof}
  Fix  $k \ge 2$ to be even. By Corollary~\ref{cor:main}, there exists a tree $T$ and a set $P$ of pairs of vertices in $T$ that is $(k-2)$-distance-separating.  

    Define a graph $G$ as follows. For each pair of vertices $\{x,y\}$ in $P$, add $k+1$ paths of $k$ edges between $x$ and $y$ that are vertex disjoint other than at the endpoints. Let $S_{xy}$ be this set of $k+1$ paths, which we will refer to as a \emph{spindle}. See Figure~\ref{fig:spindle} for an illustration of a spindle.

        \begin{figure}[h]
    \centering
    \includegraphics[width=0.6\textwidth]{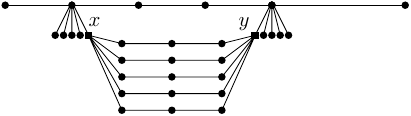}
    \caption{A spindle (with $k=4$) between vertices $x$ and $y$.}\label{fig:spindle}
    \end{figure}

    As $T$ is a tree, any cycle in $G$ must use the entirety of one of these added paths and so clearly $G$ has girth at least $k$.

    \begin{claim}\label{claim:main_cubical}
        $G$ is cubical.
    \end{claim}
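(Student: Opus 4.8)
The plan is to exhibit $G$ explicitly as a subgraph of a hypercube by extending the embedding of $T$ that the hypothesis supplies. Since $(T,P)$ is $(k-2)$-distance-separating, there is an embedding $f$ of $T$ into some $Q_n$ with $d(f(x),f(y)) = k-2$ for every pair $\{x,y\} \in P$. The key arithmetic observation is that $k$ and $k-2$ have the same parity and $k \ge k-2$, so between two cube-vertices at distance $k-2$ one can always find a path with exactly $k$ edges. I will realise each of the $k+1$ spindle paths of a given pair as such a path, at the cost of introducing fresh coordinates to keep the paths disjoint. (Here I take $k \ge 4$, so that $k-2 \ge 2$; the case $k=2$ is degenerate since $d(f(x),f(y))=0$ would force $f(x)=f(y)$, and indeed Corollary~\ref{cor:main} only supplies a $(k-2)$-distance-separating pair for $k-2 \ge 2$.)

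Concretely, I would enlarge the cube by adding one fresh coordinate for each of the (finitely many) spindle paths across all pairs in $P$. Fix a pair $\{x,y\} \in P$ and let $I_{xy}$ be the set of $k-2$ coordinates in which $f(x)$ and $f(y)$ differ. For the $j$-th spindle path between $x$ and $y$, with its own fresh coordinate $b_j$, I route it as follows: first flip $b_j$ on; then flip, one at a time, each of the $k-2$ coordinates of $I_{xy}$ (a geodesic from $f(x)$ to $f(y)$ carried out while $b_j$ stays on); then flip $b_j$ off, arriving at $f(y)$. This path has $1+(k-2)+1 = k$ edges, as required, and every edge flips exactly one coordinate, so each edge of the spindle maps to an edge of the enlarged cube.

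It then remains to check injectivity of the resulting vertex map $\phi$ on $V(G)$. Every internal vertex of the $j$-th spindle path has its coordinate $b_j$ equal to $1$, whereas all vertices of $f(T)$ and every internal vertex of every other spindle path have $b_j = 0$; hence internal vertices of distinct spindle paths, and of spindle paths versus the tree, are automatically distinct. Within a single spindle path the $k-1$ internal vertices are distinct because the middle segment is a geodesic between $f(x)\oplus e_{b_j}$ and $f(y) \oplus e_{b_j}$, in which each coordinate of $I_{xy}$ is flipped exactly once. Combined with injectivity of $f$ on $T$, this shows $\phi$ is injective, so $\phi$ realises $G$ as a subgraph of a hypercube and $G$ is cubical.

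I expect the only real subtlety to be the disjointness bookkeeping of the $k+1$ paths of a spindle together with the paths of all the other spindles; the device of lifting each path into its own fresh coordinate dispenses with this cleanly, since it simultaneously separates the paths from one another and from $T$. Equivalently, one could phrase the construction through Lemma~\ref{lem:labelling_for_cube}: keep the cubical labelling of $T$ induced by $f$, label the two end edges of each spindle path with a common fresh label and its $k-2$ interior edges with the $k-2$ directions lying in $I_{xy}$, and then verify the two defining conditions of a cubical labelling. The fresh doubled labels make the path condition immediate, while the matched parities (each direction of $I_{xy}$ appearing once on the spindle path and an odd number of times on the $x$–$y$ path of $T$) make the cycle condition hold on the fundamental cycles of each spindle, which is all that needs checking.
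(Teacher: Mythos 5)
Your proof is correct, and although the underlying construction is exactly the paper's (each spindle path is routed through the $k-2$ directions of $I_{xy}$, sandwiched between two uses of a fresh direction reserved for that path), your verification takes a genuinely different and more elementary route. The paper works entirely at the level of edge-labellings: it defines the labelling, checks the cycle condition and the path condition of a cubical labelling, and then invokes the nontrivial converse direction of Lemma~\ref{lem:labelling_for_cube} to produce an embedding. The path condition is the delicate part there: one must rule out a path that enters a spindle path only partially, and then run a walk-replacement argument projecting onto $T$ to handle paths that traverse spindle paths in their entirety. You bypass all of this by constructing the embedding explicitly in the enlarged cube and checking injectivity directly, where the fresh coordinate $b_j$ does all the work: the internal vertices of the $j$-th spindle path are the only vertices of the image with $b_j=1$, so they are automatically distinct from $f(T)$ and from the interiors of all other spindle paths, and within a single spindle path distinctness follows because the middle segment is a geodesic. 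This replaces the paper's hardest step with a short direct check. What the paper's labelling formulation buys in exchange is uniformity: the same machinery, with the labels on each spindle path chosen in a careful order, is exactly what gets reused and refined to prove Theorem~\ref{thm:induced}, where the induced-subgraph property is argued through labels.

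Two small remarks. First, your closing aside, that under the labelling phrasing the path condition would be ``immediate'' from the fresh doubled labels, is too quick: a path may contain \emph{both} edges carrying a given fresh label (so that label appears an even number of times), and this is precisely the case the paper has to work to exclude. Since your main argument never invokes the converse direction of Lemma~\ref{lem:labelling_for_cube}, this does not affect correctness, but the aside should not be read as a complete alternative proof. Second, your restriction to $k \ge 4$ is well taken; the paper's own statement ``fix $k\ge 2$ even'' glosses over the fact that Corollary~\ref{cor:main} only supplies $t$-distance-separating pairs for $t \ge 2$, though the theorem is trivial for $k=2$ in any case.
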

    \begin{claimproof}
        We define a cubical edge-labelling of $G$ in order to apply Lemma~\ref{lem:labelling_for_cube}. As $T$ is $(k-2)$-distance-separating, there exists an $n$ and an embedding $f$ of $T$ into $Q_n$ such that we have ${d(f(x),f(y)) = k-2}$ for all pairs $\{x,y\} \in P$. By Lemma~\ref{lem:labelling_for_cube}, this embedding corresponds to a cubical edge-labelling of $T$. The edges of $G$ contained in the underlying copy of $T$ (that is, those edges that are not in an added spindle) inherit these labels. 
            
        Fix $\{x,y\} \in P$. Let $I_{xy}$ be the set of coordinates that $f(x)$ and $f(y)$ differ on. Note that $|I_{xy}| = d(f(x),f(y)) = k-2$.           
        For each of the paths in $S_{xy}$, we introduce a new edge label, distinct from those used elsewhere in $G'$, and use it to label the first and last edges of the path. Label the central $k-2$ edges of the path by the elements of $I_{xy}$ in any order.
        
        Repeat this for each pair $\{x,y\} \in P$, so that every edge of $G$ has a label. We must verify that the resulting edge-labelling is cubical. As $T$ is a tree, any cycle in $G$ must use the entirety of one of the added paths. Thus a cycle in $G$ is the union of two paths from $x$ to $y$ for some $\{x,y\} \in P$. A path from $x$ to $y$ uses each label in $I_{xy}$ an odd number of times and all other labels an even number of times, so the union of two such paths uses every label an even number of times. 

        Suppose for a contradiction there is a non-empty path $Q$ in $G$ where every label is used an even number of times on $Q$. We know that $Q$ cannot be contained entirely within $T$, as the edge-labelling on $T$ is cubical. Thus there must be some $\{x,y\} \in P$ such that $Q$ intersects a path $Q' \in S_{xy}$. As $Q'$ starts and ends with a label that is not used elsewhere in $G$, we must have that $Q$ contains both the start and end edges of $Q'$. 

        If $Q$ does not contain the entirety of $Q'$, then $Q$ must consist of a path from $x$ to $y$ together with some strict subset of the edges of $Q'$. In this case, there is some direction in $I_{xy}$ that is used an odd number of times.

        Thus we may assume that for all $\{x,y\} \in P$ and for all $Q' \in S_{xy}$, the path $Q$ contains either all edges of $Q'$ or none of them. Consider the walk $\overline{Q}$ on $T$ obtained by taking $Q$ and replacing every traversal of a path in $S_{xy}$ by the path in $T$ from $x$ to $y$. We have that $\overline{Q}$ uses each label an even number of times. The label of an edge in $T$ corresponds to the direction on the  image of the edge under the embedding $f$, and so this means that the walk $\overline{Q}$ starts and ends at the same vertex. However, then $Q$ must also start and end at the same vertex, which contradicts that $Q$ is a path. 
    \end{claimproof}

    \begin{claim} \label{claim:main_layered}
        $G$ is not layered.
    \end{claim}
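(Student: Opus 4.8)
The plan is to assume for contradiction that $G$ is layered, so that there is an embedding $g$ of $G$ into some layer $L$ of a hypercube $Q_n$. The first move is to pass to the underlying tree: since $T$ is a subgraph of $G$ and $g$ is an isomorphism of $G$ onto a subgraph of $L$, the restriction $g|_T$ is an embedding of $T$ into $L$. As $(T,P)$ is $(k-2)$-distance-separating, condition~(ii) of that property applies to $g|_T$ and produces a pair $\{x,y\}\in P$ with $d(g(x),g(y)) = (k-2)+2 = k$. This is the only place the layer hypothesis is used; everything afterwards is pure cube geometry. It is worth contrasting this with the cubical embedding of Claim~\ref{claim:main_cubical}, where we had the freedom to place $x$ and $y$ at distance $k-2$ and route each spindle path out-and-back along a private new coordinate: in a layer we are instead forced up to distance exactly $k$, and this turns out to be fatal.

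Next I would examine the spindle $S_{xy}$ over this particular pair. It consists of $k+1$ paths of length $k$ from $x$ to $y$, pairwise disjoint except at $x$ and $y$. Under $g$ each such path maps to a path of length exactly $k$ in $Q_n$ joining $g(x)$ to $g(y)$. Since $d(g(x),g(y)) = k$, each of these paths has length equal to the cube-distance between its endpoints and is therefore a \emph{geodesic}. The structural fact I would invoke is the standard one that a geodesic between two vertices of $Q_n$ uses only directions in which its endpoints differ, each exactly once. Writing $I$ for the set of the $k$ coordinates on which $g(x)$ and $g(y)$ differ, this means in particular that the first edge of each of our $k+1$ geodesics has its direction in $I$, and $|I| = k$.

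The final step is pigeonhole. We have $k+1$ geodesics but only $|I| = k$ possible directions for their first edges, so two of them must leave $g(x)$ in the same direction $i \in I$. These two geodesics therefore share their second vertex, namely $g(x)$ with coordinate $i$ flipped. But the two corresponding spindle paths are internally disjoint in $G$, so their first internal vertices are distinct, and the injectivity of $g$ forbids distinct vertices from sharing an image. This contradiction shows that no embedding of $G$ into a layer exists, so $G$ is not layered.

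The genuinely substantive point, rather than the routine counting at the end, is the reduction in the first paragraph: recognising that the layered hypothesis enters solely through the distance-separating property, which pins the relevant pair at distance exactly $k$, and that ``distance exactly $k$ together with $k+1$ internally disjoint $k$-edge paths'' is already self-contradictory by a direction count that never revisits the layer. The spindle size $k+1$ is chosen precisely so that $k+1 > |I| = k$; one fewer path would not force the collision, which is exactly why the cube embedding of Claim~\ref{claim:main_cubical} (with its non-geodesic, distance-$(k-2)$ routing) survives while every layer embedding cannot.
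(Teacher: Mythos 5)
Your proof is correct and is essentially the paper's own argument: both use the $(k-2)$-distance-separating property of $(T,P)$ to force some pair $\{x,y\}\in P$ to satisfy $d(g(x),g(y))=k$, then observe that the $k+1$ internally disjoint length-$k$ paths of the spindle $S_{xy}$ must be geodesics whose first edges use distinct directions from the size-$k$ set $I$, a pigeonhole contradiction. The only difference is presentational — you spell out the restriction to $T$ and the ``shared second vertex'' step that the paper leaves implicit.
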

    \begin{claimproof}
        Suppose for a contradiction that $g$ is an embedding of $G$ into an edge-layer of a hypercube. As $T$ is $(k-2)$-distance-separating, we know that there is some $\{x,y\} \in P$ with $d(g(x),g(y)) = k$. Let $I$ be the set of coordinates that $g(x)$ and $g(y)$ differ on, so that $|I| = k$.        

        There are $k+1$ paths from $g(x)$ to $g(y)$ of length $k$ that are vertex disjoint other than at the endpoints. However, a path of length $k$ from $g(x)$ to $g(y)$ must use exactly the $k$ directions in $I$. In particular, these $k+1$ paths must each start with one of $k$ distinct directions from $I$. Hence we have a contradiction, and $G$ is not layered.
    \end{claimproof}

This concludes the proof of Theorem~\ref{thm:main}.  
\end{proof}

\section{Proof of Theorem~\ref{thm:induced}}\label{sec:indthm}
The family of graphs used to prove Theorem~\ref{thm:main} are not induced subgraphs of the hypercube. Thus it is natural to ask whether there exist induced subgraphs of the hypercube of arbitrarily large girth that are not layered. 
It turns out that there are such graphs, which we prove by extending the framework of the previous section. 

\begin{lemma} \label{lem:base_case_induced}
    Let $T$ be the tree on $9$ vertices that is the path $v_0,v_1,\ldots,v_7$ together with a vertex $v_8$ adjacent to $v_2$ and let \[P = \{ \{v_0,v_4\}, \{v_0,v_6\}, \{v_3,v_7\},  \{v_5,v_8\}\}.\] 
    Then $(T,P)$ is $2$-distance-separating. Moreover, there exists some embedding $f$ of $T$ into $Q_4$ such that    
    \begin{enumerate}[(i)]
            \item $d(f(x),f(y)) = 2$ for all pairs $\{x,y\} \in P$, 
            \item for any pair of points $\{x,y\}$ in $P$, each direction is used on at most two edges on the path from $f(x)$ to $f(y)$ in $f(T)$,    and      
            \item $f(T)$ is an induced subgraph of $Q_{4}$.
    \end{enumerate}
\end{lemma}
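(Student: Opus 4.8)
The plan is to establish the two clauses of the definition of $2$-distance-separating together with the three extra properties in a single construction, split into an \emph{upper bound} (exhibiting one good embedding, which simultaneously yields clause (i) of the definition and all of properties (i)--(iii)) and a \emph{lower bound} (clause (ii) of the definition). Identifying vertices of $Q_4$ with subsets of $\{1,2,3,4\}$, the embedding $f$ I would write down is
\[ v_0=\emptyset,\; v_1=\{1\},\; v_2=\{1,2\},\; v_3=\{1,2,3\},\; v_4=\{2,3\},\; v_5=\{2,3,4\},\; v_6=\{3,4\},\; v_7=\{1,3,4\},\; v_8=\{1,2,4\}, \]
which assigns directions $1,2,3,1,4,2,1$ to the path-edges $v_0v_1,\dots,v_6v_7$ and direction $4$ to $v_2v_8$. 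Verifying the upper bound is then a finite inspection. Property~(i) amounts to computing the four symmetric differences and observing each has size $2$; property~(ii) holds because on each of the four pair-paths (of length $4$ or $6$) no direction appears more than twice; and property~(iii), that $f(T)$ is induced, is the statement that the only pairs among these nine sets at Hamming distance $1$ are the eight tree-edges, which one checks by sorting the vertices by weight and comparing adjacent weight classes. Since this embedding realises every pair of $P$ at distance $2$, it also gives clause~(i) of the definition.

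For the lower bound I would argue exactly as in the base case, Lemma~\ref{lem:base_case}, using the labelling characterisation of Lemma~\ref{lem:labelling_for_layer}. Given an embedding $g$ of $T$ into a layer, let $\chi$ be the induced layered edge-labelling and write $c_i$ for the colour of the path-edge $v_{i-1}v_i$. It suffices to force four distinct colours on the length-$4$ path of $\{v_0,v_4\}$ or of $\{v_3,v_7\}$, or four colours of odd multiplicity on the length-$6$ path of $\{v_0,v_6\}$; in each case the relevant pair satisfies $d(g(x),g(y))=4$. This is a routine forcing argument. After normalising $c_1=1$, $c_2=2$, the path condition and the odd-distance condition force $c_3$ to be a new colour and then $c_4=1$ (otherwise $\{v_0,v_4\}$ already shows four distinct colours). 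The same two conditions restrict $c_5$ to the value $2$ or a new colour, and in each branch they successively pin down $c_6$ and $c_7$ until either $\{v_0,v_6\}$ acquires four colours of odd multiplicity or $\{v_3,v_7\}$ is forced to use four distinct colours. Notably, this part uses only the subpath $v_0\cdots v_7$: the branch vertex $v_8$ and the pair $\{v_5,v_8\}$ play no role in it.

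The branch $v_8$ and the pair $\{v_5,v_8\}$ are instead dictated by the \emph{moreover} clause, and this is where I expect the real difficulty to lie. The induced condition (iii) is in direct tension with condition (i): folding the endpoints of a pair down from their tree distance ($4$ or $6$) to distance $2$ forces directions to repeat along the path, and each repetition tends to create a short-cut chord -- a naive folding, for instance, places $v_3$ at distance $1$ from $v_0$. Arranging all four pairs to fold to distance exactly $2$ while keeping every direction to multiplicity at most two (property (ii)) and introducing no chord at all (property (iii)), and doing so inside the very tight space $Q_4$, is the main obstacle; it is precisely what determines the exact choice of tree $T$ and pair-set $P$. Once the coordinates above are fixed, however, every remaining verification is mechanical, so the substantive work is in locating such an embedding rather than in checking it.
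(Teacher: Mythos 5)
Your upper-bound half (the explicit embedding into $Q_4$ and the finite checks of properties (i)--(iii)) coincides exactly with the paper's and is fine. The genuine gap is in your lower bound, specifically the claim that ``this part uses only the subpath $v_0\cdots v_7$: the branch vertex $v_8$ and the pair $\{v_5,v_8\}$ play no role in it.'' That claim is false, and the forcing you sketch does not close. Follow your own branches: after $c_1=1$, $c_2=2$, $c_3=3$, $c_4=1$, take the branch $c_5=2$. The path and odd-distance conditions then force $c_6$ to be a new colour, say $4$ (it cannot be $1$, $2$ or $3$), but at the last step $c_7=1$ is perfectly consistent: the only same-coloured edge reachable from $v_6v_7$ by a colour-$1$-free path is $v_3v_4$, at an even gap of two edges, and every subpath of $v_0\cdots v_7$ retains a colour of odd multiplicity. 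The resulting labelling $1,2,3,1,2,4,1$ is layered, yet each of your three pairs has exactly two colours of odd multiplicity on its connecting path: $\{v_0,v_4\}$ gets $\{2,3\}$, $\{v_0,v_6\}$ gets $\{3,4\}$, and $\{v_3,v_7\}$ gets $\{2,4\}$. By Lemma~\ref{lem:labelling_for_layer} this labelling is realised by an embedding of the path into a layer in which all three pairs sit at distance $2$. (Your other branch, $c_5$ new, has a similar escape, e.g.\ $1,2,3,1,4,3,1$.) So the path $v_0\cdots v_7$ with pairs $\{v_0,v_4\},\{v_0,v_6\},\{v_3,v_7\}$ is simply not $2$-distance-separating, and no patch confined to the path can work.

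This also corrects your diagnosis of what $v_8$ is for: it is load-bearing precisely in the lower bound, not in the ``moreover'' clause. In the paper's case-check the edge $v_2v_8$ is forced to carry a fourth colour, and then (a) your problem branch $c_5=2$ (indeed any value of $c_5$ other than the colour of $v_2v_8$) is killed because $v_8v_2, v_2v_3, v_3v_4, v_4v_5$ would be four distinctly coloured edges joining the pair $\{v_5,v_8\}$; and (b) the final escape $c_7=1$ is killed by the path condition applied to $v_8v_2v_3v_4v_5v_6v_7$, all of whose colours would then have even multiplicity. The induced-subgraph clause is checked directly on the explicit embedding and would hold with or without $v_8$; the tree is enlarged beyond a bare path because a bare path cannot satisfy clause (ii) of the definition of $2$-distance-separating at all.
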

See Figure~\ref{fig:induced_base_case} for a diagram of the tree $T$.

\begin{proof}
 	Let $f$ be the following embedding of $T$ into $Q_4$:
	\[
	v_0 \mapsto \emptyset, \qquad
	v_1 \mapsto \{1\}, \qquad
	v_2 \mapsto \{1,2\}, \qquad
	v_3 \mapsto \{1,2,3\}, \qquad
        v_4 \mapsto \{2,3\},\]
	\[	
	v_5 \mapsto \{2,3,4\}, \qquad
	v_6 \mapsto \{3,4\}, \qquad
        v_7 \mapsto \{1,3,4\}, \qquad
        v_8 \mapsto \{1,2,4\}.
	\]
	It is easy to check that $f(T)$ is an induced subgraph of $Q_{4}$ as two vertices $u,v$ are adjacent in $T$ if and only if $f(u)$ and $f(v)$ are adjacent in $Q_4$. Moreover,  for all $\{x,y\} \in P$, we have $d(f(x),f(y)) = 2$ and each direction is used on at most two edges on the path from $f(x)$ to $f(y)$ in $f(T)$. 
	
To complete the proof we will use the following claim. 
 \begin{claim} \label{claim:label_T}
     Let $\chi:E(T) \rightarrow \mathbb{N}$ be any layered edge-labelling of $T$. Then for some $\{x,y\} \in P$, there exist four labels each of which is used exactly once on the edges of the path between $x$ and $y$.
 \end{claim}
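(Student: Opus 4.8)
The plan is to mimic the case-analysis used in Claim~\ref{claim:labelP_6}, but now exploiting the extra structure of $T$: it contains four different length-$4$ paths (the members of $P$), three of which overlap heavily along the central path $v_0v_1\cdots v_7$, and one of which, $\{v_5,v_8\}$, branches off through $v_2$. The idea is that a layered labelling is so constrained along a single path that forcing ``all four labels distinct'' to fail on one path pushes a repetition onto a neighbouring path, and there are enough overlapping paths in $P$ that not all of them can simultaneously avoid having four distinct labels.

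Concretely, I would begin exactly as before. Fix a layered labelling $\chi$ and set up the three layered conditions~\ref{condition:cycle}--\ref{condition:odd_distance}. Start by labelling $\chi(v_0v_1)=1$. As in the base-case proof, conditions~\ref{condition:path} (no two consecutive edges share a label) and~\ref{condition:odd_distance} (two edges of the same colour are at even distance, so edges two apart on a path must differ) force the labels along $v_0v_1\cdots v_4$ into a pattern; if at any stage a genuinely new fourth label appears we are immediately done via the pair $\{v_0,v_4\}$. So I would assume the central path's labels are forced into the periodic pattern $1,2,3,1,2,\ldots$ that the earlier proof showed is essentially the only way to avoid four distinct labels on four consecutive edges. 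The key new step is then to bring in the pair $\{v_5,v_8\}$: the path from $v_5$ to $v_8$ is $v_5v_4v_3v_2v_8$, so it reuses the edges $v_4v_5$, $v_3v_4$, $v_2v_3$ together with the branch edge $v_2v_8$. I would argue that once the central labels are forced into the repeating pattern, condition~\ref{condition:odd_distance} (comparing $v_2v_8$ against $v_1v_2$ at distance two, since $v_8v_2v_1$ is a path) and condition~\ref{condition:path} force the branch edge $v_2v_8$ to take a label that makes the four edges on the $v_5$--$v_8$ path all distinct --- unless that in turn forces a new label earlier, which again finishes via one of $\{v_0,v_4\},\{v_0,v_6\},\{v_3,v_7\}$.

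The organising principle I would make explicit is a short counting/parity observation: along the long path $v_0\cdots v_7$, avoiding four distinct consecutive labels everywhere forces a strict period-$3$ colour pattern, but a period-$3$ pattern on $7$ edges combined with the global condition~\ref{condition:path} applied to the whole path (no single colour can occur an odd number of times impossibly often) and the branch at $v_2$ creates an unavoidable clash. Rather than track every branch by hand, I would run the same deterministic forcing used in Claim~\ref{claim:labelP_6} along the spine, observe that the three ``spine'' pairs $\{v_0,v_4\},\{v_0,v_6\},\{v_3,v_7\}$ cover overlapping windows of four consecutive edges each, and then use the branch pair $\{v_5,v_8\}$ as the final lever: if all three spine windows have a repeated label, the repetitions are pinned down precisely enough (by~\ref{condition:odd_distance} at distance two and~\ref{condition:path} at distance one) that the colour forced on $v_2v_8$ cannot coincide with any of $\chi(v_2v_3),\chi(v_3v_4),\chi(v_4v_5)$, yielding four distinct labels on $v_5v_4v_3v_2v_8$.

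I expect the main obstacle to be bookkeeping rather than conceptual difficulty: the branching vertex $v_8$ means I cannot simply quote the linear argument for $P_6$, and I must carefully check every sub-case of the forced spine pattern against the branch edge to be sure no alternative labelling slips through. The delicate point is the interaction of the two layered conditions at the junction $v_2$ --- I need to verify that condition~\ref{condition:odd_distance} applies correctly to the pair $\{v_1v_2, v_2v_8\}$ (they lie on the path $v_1v_2v_8$ of length one between them, so they must differ) and that the longer-range instances of condition~\ref{condition:path}, for example on the whole path $v_8v_2v_1v_0$ or $v_8v_2v_3\cdots$, are all respected. Once those junction constraints are tabulated, the conclusion that some pair in $P$ has four singly-used labels should follow by the same routine exhaustion as in the base case, and I would relegate the full case list to a remark that it is a finite check.
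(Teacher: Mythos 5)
Your high-level strategy (a deterministic finite forcing check along the spine, using the pairs in $P$ as levers) is the same as the paper's, but the pivotal structural claims you build it on are false, and the case they dismiss is exactly the hard case. The forcing does begin as you say: $\chi(v_0v_1)=1$, $\chi(v_1v_2)=2$, $\chi(v_2v_3)=3$, the branch edge $\chi(v_2v_8)$ forced to a new label $4$, and $\chi(v_3v_4)=1$. But your organising principle --- that the colour on $v_2v_8$ ``cannot coincide with any of $\chi(v_2v_3),\chi(v_3v_4),\chi(v_4v_5)$'' --- fails at the third edge: $v_2v_8$ and $v_4v_5$ are joined by the path $v_2v_3v_4$ of even length $2$ containing no edge of colour $4$, so the even-distance condition permits $\chi(v_4v_5)=4$, and no other condition forbids it locally. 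Your period-$3$ picture is in fact backwards: if the spine did continue periodically with $\chi(v_4v_5)=2$ (or took any label outside $\{1,3,4\}$), the branch pair $\{v_5,v_8\}$ would finish \emph{immediately}, with labels $2,1,3,4$ all distinct; so the only way a layered labelling survives past $v_5$ is precisely by setting $\chi(v_4v_5)=\chi(v_2v_8)=4$, breaking periodicity and killing the ``final lever'' your plan relies on, since then the $v_5$--$v_8$ path carries labels $4,1,3,4$.

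Handling that surviving case is where the real content lies, and your outline has no tool for it. One must push the forcing further: $\chi(v_5v_6)$ is forced to equal $3$, since otherwise the pair $\{v_0,v_6\}$ finishes --- and note this pair's path has six edges, not a ``window of four consecutive edges''; the labels are $1,2,3,1,4,\text{new}$, and it is the claim's ``four labels each used exactly once'' formulation (with $1$ repeating) that does the work. Then $\chi(v_6v_7)$ cannot be $3$ or $4$ by local conditions, and --- the key step --- cannot be $1$, because the path $v_8v_2v_3v_4v_5v_6v_7$ would then carry labels $4,3,1,4,3,1$, every label an even number of times, violating the condition that some label occurs an odd number of times on every path. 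That long-range parity argument through the branch vertex, which your proposal mentions only as something to ``verify is respected'', is the actual crux, and it is the pair $\{v_3,v_7\}$ (labels $1,4,3,\ast$) that closes the proof. The paper's proof accordingly uses $\{v_5,v_8\}$ in the \emph{middle} of the forcing, to pin $\chi(v_4v_5)$ to the branch label, and uses $\{v_3,v_7\}$ as the final lever --- the opposite of the architecture you propose.
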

 \begin{claimproof}[Proof of Claim]
 This is a routine check. 
 Recall that for an edge-labelling to be layered the following three conditions must hold:
 \begin{enumerate}[(i)]
     	\item For any set of edges $e_1,e_2,\ldots, e_k$ forming a cycle, each possible label occurs an even number of times in $\chi(e_1), \chi(e_2), \ldots, \chi(e_k)$, \label{condition:cycle_2}
		\item For any (non-empty) set of edges $e_1,e_2,\ldots, e_k$ forming a path, some label occurs an odd number of times in $\chi(e_1), \chi(e_2), \ldots, \chi(e_k)$, \label{condition:path_2}
        \item for any for any two edges of the same colour $c$, any path between them that has no edges of colour $c$ has an even length. \label{condition:odd_distance_2}
 \end{enumerate}

     Without loss of generality, let edge $v_0v_1$ have label $1$. By condition~\ref{condition:path_2} on the path $v_0v_1v_2$, edge $v_1v_2$ has a different label to $1$, say $2$. 
     
     Edge $v_2v_3$ cannot have label $2$ by condition~\ref{condition:path_2} on the path $v_1v_2v_3$, and cannot have label $1$ by condition~\ref{condition:odd_distance_2} with edge $v_0v_1$. So it must have a new label, say $3$.

     Similarly, edge $v_2v_8$ cannot have label $2$ or $3$ by condition~\ref{condition:path_2} on the paths $v_1v_2v_3$ and $v_1v_2v_8$ respectively, and cannot have label $1$ by condition~\ref{condition:odd_distance_2} with edge $v_0v_1$. So it must have a new label, say $4$.

     Edge $v_3v_4$ cannot have label $3$ by condition~\ref{condition:path_2} on the path $v_2v_3v_4$, and cannot have label $2$ by condition~\ref{condition:odd_distance_2} with edge $v_1v_2$. If it had a label other than $1,2,3$ then the edges $v_0v_{1}, v_{1}v_{2}, v_{2}v_{3}, v_{3}v_{4}$ would all have distinct labels and we would be done. So we may suppose that $v_2v_3$ has label $1$.

    Edge $v_4v_5$ cannot have label $1$ by condition~\ref{condition:path_2} on the path $v_3v_4v_5$, and cannot have label $3$ by condition~\ref{condition:odd_distance_2} with edge $v_2v_3$. If it had a label other than $1,3,4$ then edges $v_{8}v_{2}, v_{2}v_{3}, v_{3}v_{4},v_4v_5$ would all have distinct labels and we would be done. So we may suppose that $v_2v_3$ has label $4$. 

    Edge $v_5v_6$ cannot have label $4$ by condition~\ref{condition:path_2} on the path $v_4v_5v_6$. It cannot have label $1$ or $2$ by condition~\ref{condition:odd_distance_2} with edges $v_3v_4$ and $v_1v_2$ respectively.  Therefore it must have a label distinct from $1,2,4$.  If it had a label other than $1,2,3,4$ then the edges of the path from $v_0$ to $v_6$ would have four labels used exactly once and we would be done. So we may suppose that $v_2v_3$ has label $3$. See Figure~\ref{fig:induced_base_case} for an illustration of the labelling forced up to this point.

    \begin{figure}[h]
    \centering
    \includegraphics[scale=1.1]{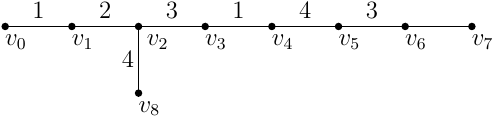}
    \caption{The edge-labels forced on $T$ during the proof of Lemma~\ref{lem:base_case_induced}.}\label{fig:induced_base_case}
    \end{figure}

    Finally, edge $v_6v_7$ cannot have label $3$  by condition~\ref{condition:path_2} on the path $v_5v_6v_7$. It cannot have label $4$ by condition~\ref{condition:odd_distance_2} with edge $v_4v_5$. It also cannot have label $1$ by condition~\ref{condition:path_2} on the path $v_8v_2v_3v_4v_5v_6v_7$. Therefore $v_6v_7$ must have a label other than $1,3,4$.
    
    Then the edges $v_{3}v_{4},v_4v_5, v_5v_6, v_6v_7$ all have distinct labels and we are done.
 \end{claimproof}

 Let $f$ be an embedding of $T$ into a layer $L$ of a hypercube. Let  $\chi: E(G) \rightarrow \mathbb{N}$ be the edge-labelling where $\chi(uv)$ is the direction of the edge $f(u)f(v)$, which is layered by Lemma~\ref{lem:labelling_for_layer}.  By Claim~\ref{claim:label_T}, there is some pair of points $\{x,y\} \in P$ where there are four labels used exactly once on the path between them. Therefore, for this $x,y$ we have $d(f(x),f(y)) = 4$, concluding the proof of Lemma~\ref{lem:base_case_induced}.
\end{proof}

We will use Lemma~\ref{lem:base_case_induced} together with Lemma~\ref{lem:recursion} to prove Theorem~\ref{thm:induced}, which we restate below for convenience.

\induced*
\addtocounter{theorem}{1}
\begin{proof}
    
   Fix  $k > 4$ to be even, and let $r = (k/2)-1 > 1$.

   We define a sequence $(T_1,P_1), (T_2,P_2),\ldots, (T_{r},P_{r})$ as follows: 
   Let $(T_1,P_1)$ be the tree and set of pairs of vertices given in Lemma~\ref{lem:base_case_induced}. 
    Given $(T_{i-1},P_{i-1}$ is $2(i-1)$-distance separating, let $(T_i,P_i)$ be as given by Lemma~\ref{lem:recursion}, so that $(T_i,P_i)$ is $2i$-distance-separating.
    Specifically, we let $T_i$ be the tree formed from $T_{i-1}$ by adding $s$ new vertices adjacent to 
    $x$, for each $x \in T_{i-1}$. For $\{x,y\} \in P_{i-1}$, let $P'_{xy}$ be the set of all pairs consisting of one of these new vertices adjacent to $x$ and one of the new vertices adjacent to $y$. Finally, let $P_i$ be the union of $P'_{xy}$ taken over all pairs $\{x,y\}$ in $P_{i-1}$.
    
    Note that by construction, for all $i>1$ all vertices in a pair in $P_i$ are leaves of the tree $T_i$. We claim that for all $1 \le i \le \ell$, there  exists some $n$ and some embedding $f^{(i)}$ of $T_i$ in $Q_n$ such that: 
    \begin{enumerate}[(i)]
            \item $d(f^{(i)}(x),f^{(i)}(y)) = 2i$ for all pairs $\{x,y\} \in P_{i}$, \label{eq:distance}
            \item for any pair of points $\{x,y\}$ in $P_i$, each direction is used on at most two edges on the path from $f^{(i)}(x)$ to $f^{(i)}(y)$ in $f^{(i)}(T_i)$, and \label{eq:direction_used_twice}   
            \item $f^{(i)}(T_i)$ is an induced subgraph of $Q_{n}$. \label{eq:induced}
    \end{enumerate}
    All of these hold for $T_1$ where $f^{(1)}$ is the embedding given in Lemma~\ref{lem:base_case_induced}. Consider the case $i>1$ and take $f^{(i)}$ as given in the proof of Lemma~\ref{lem:recursion}, so that all edges in $E(T_i)\setminus E(T_{i-1})$ get new distinct directions under $f^{(i)}$ and the remaining edges of inherit the directions given by $f^{(i-1)}$.
    
    Property \ref{eq:distance} follows from the proof of Claim~\ref{claim:recursion_cubical}.  
    
    For  $\{x,y\}$ in $P_i$, there is some $\{u,v\} \in P_{i-1}$ such that $x$ is attached to $u$ and $y$ is attached to $v$. Thus property~\ref{eq:direction_used_twice} then holds by induction, as the new edges $f^{(i)}(x)f^{(i)}(u)$ and $f^{(i)}(y)f^{(i)}(v)$ receive new directions, and the other edges of the path inherit the directions given by $f^{(i-1)}$.
    
    Finally, property \ref{eq:induced} also follows by induction, using the fact that all new edges are given a unique new direction. 
   
   Let $s = k+1$.
    We define a graph $G$ from $T_{r}$ as in the proof of Theorem~\ref{thm:main}, where for each pair of vertices $\{x,y\}$ in $P_{r}$ we add $s$ paths of length $k$ between $x$ and $y$ that are vertex disjoint other than at the endpoints. Let $S_{xy}$ be this set of paths.

    As $T_{r}$ is a tree, any cycle in $G$ must use the entirety of one of these added paths and so clearly $G$ has girth greater than $k$.

    The proof that $G$ is not layered is identical to the proof of Claim~\ref{claim:main_layered}. Thus all that remains is to prove the following claim.

    \begin{claim}
        $G$ is an induced subgraph of some hypercube.
    \end{claim}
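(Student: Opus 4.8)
The plan is to build an induced embedding directly, starting from the cubical embedding of $G$ constructed exactly as in Claim~\ref{claim:main_cubical}: the tree edges keep the directions given by $f^{(r)}$, and each spindle path $P\in S_{xy}$ flips a fresh direction $\ell_P$ on its first and last edge and the $k-2$ directions of $I_{xy}$ (the coordinates in which $f^{(r)}(x)$ and $f^{(r)}(y)$ differ) on its middle edges. The one extra freedom I would keep is the \emph{order} in which the directions of $I_{xy}$ are used along each path. Whatever orders are chosen, I would first check that the only non-adjacent pairs of $G$ whose images can lie at Hamming distance at most $1$ are those consisting of an internal vertex of a spindle path and a vertex of the tree. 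Indeed, the construction does not move the images of tree vertices, so property~\ref{eq:induced} keeps all tree--tree distances at least $2$; two internal vertices lying on different paths (of the same or of different spindles) differ in the two distinct fresh coordinates $\ell_P,\ell_{P'}$ and so are at distance at least $2$; and two internal vertices of one path are at distance equal to their separation along the path, since its directions are distinct apart from the repeated $\ell_P$ at the two ends. Note that placing $\ell_P$ at the two ends forces every internal vertex to have $\ell_P$-coordinate $1$, so it can never coincide with a tree vertex.

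It then remains to handle an internal vertex against a tree vertex. Writing such an internal vertex as $f^{(r)}(x)\oplus\{\ell_P\}\oplus S$, where $S\subseteq I_{xy}$ records the middle directions already used, a direct computation shows that it lies at distance $1$ from a tree vertex $w$ exactly when $f^{(r)}(w)=f^{(r)}(x)\oplus S$ with $1\le |S|\le k-3$; equivalently, when $f^{(r)}(w)$ lies in the subcube $C_{xy}$ obtained by flipping the coordinates of $I_{xy}$ from $f^{(r)}(x)$, at a level between $1$ and $k-3$. Thus an induced embedding results as soon as, for each pair $\{x,y\}\in P_{r}$, the directions of $I_{xy}$ can be ordered so that none of the first $k-3$ partial unions is such a set $S$; that is, so that the corresponding monotone path from $f^{(r)}(x)$ to $f^{(r)}(y)$ in $C_{xy}$ avoids the tree vertices at its intermediate levels.

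The key structural step, which I expect to be the main obstacle, is to show that $C_{xy}$ contains very few tree vertices. Here I would use the fact that the edges added at each stage of the recursion receive fresh directions. If $x$ and $y$ are the leaves attached at the last stage to $u$ and $v$, with fresh directions $\alpha$ and $\beta$, then $\alpha$ distinguishes $x$ from every other tree vertex and $\beta$ distinguishes $y$; this forces any tree vertex of $C_{xy}$ other than $x,y$ to satisfy $f^{(r)}(w)=f^{(r)}(u)\oplus S'$ with $S'\subseteq I_{uv}$, i.e.\ to lie in the analogous subcube $C_{uv}$. Iterating this reduction, and checking directly that in the base embedding of Lemma~\ref{lem:base_case_induced} each of the four relevant subcubes contains only its two endpoints, I would conclude that the tree vertices of $C_{xy}$ are precisely the two ancestral chains running from $x$ and from $y$ down to their base vertices, the first chain occupying the low levels and the second the high levels. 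In particular these chains occupy distinct levels, so $C_{xy}$ meets $f^{(r)}(T_r)$ in at most one vertex per level. Maintaining properties~\ref{eq:direction_used_twice} and~\ref{eq:induced} along the recursion is exactly what makes this structural description available.

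Finally, with at most one forbidden set at each level, a greedy construction of the ordering finishes the argument: building the chain $\emptyset=S_0\subset S_1\subset\cdots\subset I_{xy}$ one element at a time, at each level $j\le k-3$ there are $k-1-j\ge 2$ ways to extend $S_{j-1}$, so one can always avoid the single forbidden set at that level. Carrying out this choice for every path of every spindle gives an embedding of $G$ in which no two non-adjacent vertices are at Hamming distance $1$, so $G$ is an induced subgraph of a hypercube, completing the proof.
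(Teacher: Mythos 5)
Your construction of the labelling is the same as the paper's up to the choice of ordering of $I_{xy}$ along each added path, but your argument for induced-ness takes a genuinely different route. The paper proves no structural lemma about the subcube $C_{xy}$ and needs no greedy choice: it fixes one explicit ordering, labelling each path $\ell(Q), \ell_2, \ell_3, \ldots, \ell_{k-2}, \ell_1, \ell(Q)$, i.e.\ cyclically shifting so that $\ell_1$ --- the direction of the unique tree edge at the leaf $x$ --- comes \emph{last}, and then verifies by a short case analysis (counting labels used an odd number of times on paths from $u$ to $v$) that no two non-adjacent vertices land at distance $1$. In your language, the paper's trick is exactly your own top-level observation: every tree vertex other than $x$ differs from $f^{(r)}(x)$ in the coordinate $\ell_1$, because $\ell_1$ is a fresh direction used on no other edge of $T_r$. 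Hence any partial union omitting $\ell_1$ can only hit $x$ itself, so putting $\ell_1$ last empties \emph{all} intermediate levels at once. Your structural lemma (the two ancestral chains, at most one tree vertex per level) and the greedy extension are therefore correct in conclusion but do substantially more work than is needed; the paper buys brevity with one clever deterministic choice, while your version has the merit of showing that almost any ordering works and of making the geometry of $C_{xy}\cap f^{(r)}(T_r)$ explicit.

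There is, however, one real inaccuracy to repair in your structural step. The assertion ``$\alpha$ distinguishes $x$ from every other tree vertex'' is true only at the top stage, where $x = x_r$ is a leaf of $T_r$. Once you iterate, the pair $\{x_i, y_i\} \in P_i$ with $i < r$ consists of vertices that are \emph{not} leaves of $T_r$: every vertex hung below $x_i$ at stages $i+1, \ldots, r$ agrees with $f^{(r)}(x_i)$ in the coordinate $\alpha_i$, so $\alpha_i$ does not separate $x_i$ from those vertices and the stated reduction does not apply to them. The fix is short but necessary: any such descendant $w$ is joined to $x_i$ by a tree path all of whose edges carry distinct directions introduced after stage $i$, each used exactly once on that path and none lying in $I_{x_iy_i}$; hence $f^{(r)}(w) \oplus f^{(r)}(x_i)$ contains a coordinate outside $I_{x_iy_i}$, so $w \notin C_{x_iy_i}$. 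The same argument is needed at the bottom, to see that vertices of $T_r \setminus T_1$ do not enter the base subcubes checked in Lemma~\ref{lem:base_case_induced}. With that patch your chain description of $C_{xy} \cap f^{(r)}(T_r)$ is correct, the bound of at most one tree vertex per level follows, and your greedy construction of the orderings (using $k-1-j \ge 2$ available extensions at each level $j \le k-3$) goes through.
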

    \begin{claimproof}
        
        We define a cubical edge-labelling of $G$ in order to apply Lemma~\ref{lem:labelling_for_cube}. Let $f^{(\ell)}$ be the embedding of $T_{r}$ into $Q_n$. By Lemma~\ref{lem:labelling_for_cube}, this embedding corresponds to a cubical edge-labelling of $T_{r}$. The edges of $G$ contained in the underlying copy of $T_{r}$ (that is, those edges that are not in an added path) inherit these labels. 
            
        Fix $\{x,y\} \in P_r$. Let $I_{xy}$ be the set of coordinates that $f^{(\ell)}(x)$ and $f^{(\ell)}(y)$ differ on. Note that $|I_{xy}| = d(f^{(\ell)}(x),f^{(\ell)}(y)) = k-2$.           
          
        We will label the central $k-2$ edges of each path in $S_{xy}$ by the elements of $I_{xy}$ as in the proof of Claim~\ref{claim:main_cubical}, but this time the order will matter. By property \ref{eq:direction_used_twice}, we know that each label is used at most twice on the path from $x$ to $y$ in $T$. Thus each label in $I_{xy}$ is used exactly once, as it is used an odd number of times. Let $\ell_1, \ell_2, \ldots, \ell_{k-2}$ be the labels in $I_{xy}$ in the order they are used on the path from $x$ to $y$. 

        For each path $Q$ in $S_{xy}$, we introduce a new edge label, $\ell(Q)$, that is distinct from those used elsewhere in $G$. 
        Then going from $x$ to $y$, we label the edges of $Q$ by $\ell(Q), \ell_2, \ell_3, \ldots, \ell_{k-2}, \ell_1, \ell(Q)$ in that order.
      
        Repeat this for each pair $\{x,y\} \in P_r$, so that every edge of $G$ has a label. The verification that the resulting edge-labelling is cubical is identical to the proof of Claim~\ref{claim:main_cubical}. We therefore only need to justify that $f(G)$ is an induced subgraph of a hypercube.

        Suppose for a contradiction that there exist vertices $u,v \in G$ where $f(u)$ and $f(v)$ are adjacent in the hypercube but $u$ and $v$ are not adjacent in $G$. This means that on any path from $u$ to $v$, there is exactly one label that is used an odd number of times. 

        Since $f^{(\ell)}$ is an embedding of $T_{r}$ into an induced subgraph of a hypercube, $u$ and $v$ cannot both lie in $T_{r}$. We may assume that $u$ does not lie in $T_{r}$ and so $u$ is in the interior of a path $Q$ in $S_{xy}$ for some $\{x,y\} \in P_r$.

        If $v = x$, then the subpath of $Q$ from $u$ to $v$ uses the labels $\ell_2$ and $\ell(Q)$ exactly once. Similarly, if $v = y$, then the subpath of $Q$ from $u$ to $v$ uses the labels $\ell_1$ and $\ell(Q)$ exactly once. 
        
        If $v$ is in the interior of the same path $Q$ then there are at least two labels used exactly once on the subpath between $u$ and $v$, as $\ell_1, \ell_2, \ldots, \ell_{k-2}$ are distinct.         
        If $v$ is in the interior of some other added path $Q'$, then the path from $u$ to $v$ uses at least two labels used exactly once: namely, the new labels $\ell(Q)$ and $\ell(Q')$.
    
        So we may assume that $v$ is a vertex that lies in $T_{r}\setminus\{x,y\}$. As $x$ is a leaf of $T_{r}$ and its adjacent edge has a label that is unique, this label must be $\ell_1$. In particular, the path in $T_{r}$ from $x$ to $v$ uses label $\ell_1$. If the subpath of $Q$ from $u$ to $x$ does not use label $\ell_1$, then we obtain a path from $u$ to $v$ via $x$ there are at least two labels used exactly once: namely, $\ell_1$ and $\ell(Q)$. 

        The only remaining case is if the subpath of $Q$ from $u$ to $x$ uses label $\ell_1$, which implies that $u$ is adjacent to $y$ and in particular, the path from $u$ to $y$ does not use label $\ell_{k-2}$. As $y$ is a leaf of $T_{r}$ and its adjacent edge has a label that is unique, this label must be $\ell_{k-2}$. In particular, the path in $T_{r}$ from $x$ to $v$ uses label $\ell_{k-2}$. Therefore we obtain a path from $u$ to $v$ via $y$ where there are at least two labels used exactly once: namely, $\ell_{k-2}$ and $\ell(Q)$. 
    \end{claimproof}
This concludes the proof of Theorem~\ref{thm:induced}.
\end{proof}

It would be interesting to know what dimension of cube is needed to
obtain a non-layered subgraph of girth $k$. The construction above
gives a dimension of the form $k^k$, but perhaps there are even
examples when $n$ is only polynomial in $k$. It would also be nice
to know if the number of vertices in such a cubical graph may be
taken to be just exponential, or even polynomial, in $k$.

\printbibliography

@unpublished{axenovich2023graphs,
      title={On graphs embeddable in a layer of a hypercube and their extremal numbers}, 
      author={Maria Axenovich and Ryan R. Martin and Christian Winter},
year={2023},
      eprint={2303.15529},
      archivePrefix={arXiv}
}

@article{havel1972,
author = {Havel, Ivan and Morávek, Jaroslav},
journal = {Czechoslovak Mathematical Journal},
number = {2},
pages = {338-351},
publisher = {Institute of Mathematics, Academy of Sciences of the Czech Republic},
title = {$B$-valuations of graphs},
url = {http://eudml.org/doc/12662},
volume = {22},
year = {1972}
}

@book{bollobas2013modern,
  title={Modern graph theory},
  author={Bollob{\'a}s, B{\'e}la},
  volume={184},
  year={2013},
  publisher={Springer Science \& Business Media}
}

@unpublished{grebennikov2024c10,
      title={$C_{10}$ has positive Tur\'an density in the hypercube}, 
      author={Alexandr Grebennikov and João Pedro Marciano},
      year={2024},
      eprint={2402.19409},
      archivePrefix={arXiv}
}

\Addresses
\end{document}